\crefname{hypothesis}{Hypothesis}{Hypotheses}
\newtheorem{remark}{Remark}
\title{Sharp error bounds for turning point expansions}
\author{T. M. Dunster\thanks{Department of Mathematics and Statistics, San Diego State University, 5500 Campanile Drive, San Diego, CA 92182, USA. 
  (\email{mdunster@sdsu.edu}, \url{https://tmdunster.sdsu.edu}).}
\and A. Gil\thanks{Departamento de Matem\'atica Aplicada y CC. de la Computaci'on, ETSI Caminos, Universidad de Cantabria, 39005-Santander, Spain. 
  (\email{amparo.gil@unican.es}, \email{javier.segura@unican.es}).}
\and J. Segura\footnotemark[3]}
\newcommand*{\addFileDependency}[1]{
  \typeout{(#1)}
  \@addtofilelist{#1}
  \IfFileExists{#1}{}{\typeout{No file #1.}}
}
\begin{document}

\maketitle

\begin{abstract}
  Computable and sharp error bounds are derived for asymptotic expansions for linear differential equations having a simple turning point. The expansions involve Airy functions and slowly varying coefficient functions. The sharpness of the bounds is illustrated numerically with an application to Bessel functions of large order.
\end{abstract}

\begin{keywords}
  {Asymptotic expansions, Airy functions, Turning point theory, WKB methods}
\end{keywords}

\begin{AMS}
  34E05, 33C10, 34E20
\end{AMS}

\section{Introduction} 
\label{sec1}
In this paper we obtain sharp error bounds for a recent form of asymptotic expansions involving Airy functions and slowly varying coefficient functions for linear differential equations having a simple turning point. This is a sequel to the paper \cite{Dunster:2020:SEB} in which the aforementioned error bounds were obtained in terms of elementary functions. We show that by a manipulation of these bounds we obtain new bounds which are extremely close the the exact errors. The method is based on the one used for bounds derived in \cite{Dunster:2020:LGE} for the Liouville-Green (LG) approximation.

The differential equations we study are of the form 
\begin{equation} 
d^{2}w/dz^{2}=\left\{ {u^{2}f(z) +g(z) }\right\} w, 
\label{eq1} 
\end{equation}
where $u$ is a large parameter, real or complex, and $z$ lies in a complex domain which may be unbounded. The functions $f(z)$ and $g(z)$ are meromorphic in a certain domain $Z$ (precisely defined below), and are independent of $u$ (although the latter restriction can often be relaxed without undue difficulty). We further assume that $f(z)$ has no zeros in $Z$ except for a simple zero at $z=z_{0}$, which is the turning point of the equation. 

In the ensuing asymptotic expansions the following two variables have a prominent role, these being given by 
\begin{equation} 
\xi =\tfrac{2}{3}\zeta ^{3/2}=\pm \int_{z_{0}}^{z}{f^{1/2}(t) dt} . 
\label{xi} 
\end{equation}
The variable $\zeta$ appears in the Airy function expansions which are valid at the turning point, and is an analytic function of $z$ at $z=z_{0}$. As a function of $z$ the Liouville-Green variable $\xi$ has a branch point at the turning point. Any branch in (\ref{xi}) can be chosen provided that $\xi$ is continuous on the paths of integration in the error bounds.

Following \cite{Dunster:2020:SEB} we define three sectors in the $\zeta$ plane by
\begin{equation}
\mathrm{\mathbf{T}}_{j}=\left\{ \zeta :\left\vert {\arg \left( {u^{2/3}\zeta e^{-2\pi ij/3}}\right) }\right\vert \leq {\tfrac{1}{3}}\pi 
\right\} \ \left( {j=0,\pm 1}\right).
\label{oeq6}
\end{equation}

We further partition each of the sectors by defining $\mathrm{\mathbf{T}}_{j}=\mathrm{\mathbf{T}}
_{j,k}\cup \mathrm{\mathbf{T}}_{j,l}$ ($j,k,l\in \left\{0,1,-1\right\}$, $j\neq k\neq l\neq j$), where $\mathrm{\mathbf{T}}_{j,k}$ is the closed
subsector of angle $\pi /3$ and adjacent to $\mathrm{\mathbf{T}}_{k}$; for
example $\mathrm{\mathbf{T}}_{0,1}=\left\{ \zeta {:0\leq \arg \left( {u^{2/3}
}\zeta \right) \leq {\tfrac{1}{3}}\pi }\right\} $. We denote $T_{j}$ (respectively $T_{j,k}$) to be the region in the $z$ plane corresponding to the sector $\mathrm{\mathbf{T}}_{j}$
(respectively $\mathrm{\mathbf{T}}_{j,k}$) in the $\zeta $ plane. See \Cref{fig:fig1} for some typical regions in the right half $z$ plane for the case $z_{0}$ and $u$ positive.

Next, let $Z$ be the $z$ domain containing $z=z_{0}$ in which $f(z)$ has no other zeros, and in which $f(z)$ and $g(z)$ are meromorphic, with poles (if any) at finite points $z=w_{j}$ ($j=1,2,3,\cdots $) such that

(i) $f(z)$ has a pole of order $m>2$, and $g(z)$ is analytic or has a pole of order less than $\frac{1}{2}m+1$, or

(ii) $f(z)$ and $g(z)$ have a double pole, and 
$\left( z-w_{j}\right) ^{2}{g(z) \rightarrow -}\frac{1}{4}$ as $z\rightarrow w_{j}$.

We call these \textit{\ admissible poles}. For $j=0,\pm 1$ we then choose an arbitrary $z^{(j) }\in T_{j}\cap Z$. These will be either at an admissible pole, or at infinity if $f(z)$ and $g(z)$ can be expanded in convergent series in a neighborhood of $z=\infty $ of the form
\begin{equation}
f(z)=z^{m}\sum\limits_{s=0}^{\infty }f_{s}z^{-s},\ g(z)=z^{p}\sum\limits_{s=0}^{\infty }g_{s}{z}^{-s},
\label{fginfinity}
\end{equation}
where $f_{0}\neq 0$, $g_{0}\neq 0$, and either $m$ and $p$ are integers such that $m>-2$ and $p<\frac{1}{2}m-1$, or $m=p=-2$ and $g_{0}=-\frac{1}{4}$. For details and generalizations of (\ref{fginfinity}) see \cite[Chap. 10, Sects. 4 and 5]{Olver:1997:ASF}.

For each\ $j=0,\pm 1$\ the following LG region of validity $Z_{j}(u,z^{(j)})$ (abbreviated $Z_{j}$) then comprises the $z$ point set for which there is a \textit{progressive} path $\hat{\mathcal{L}}_{j}$ linking $z$ with $z^{(j) }$ in $Z$ and having the properties (i) $\hat{\mathcal{L}}_{j} $ consists of a finite chain of $R_{2}$ arcs (as defined in \cite[Chap. 5, sec. 3.3]{Olver:1997:ASF}), and (ii) as $v$ passes along $\hat{\mathcal{L}}_{j} $ from $z^{(j) }$ to $z$, the real part of $(-1)^{j}u\xi(v)$ is nonincreasing,
where $\xi(v)$ is given by (\ref{xi}) with $z=v$, and with the chosen sign fixed throughout.

\begin{figure}[htbp]
  \centering
  \includegraphics{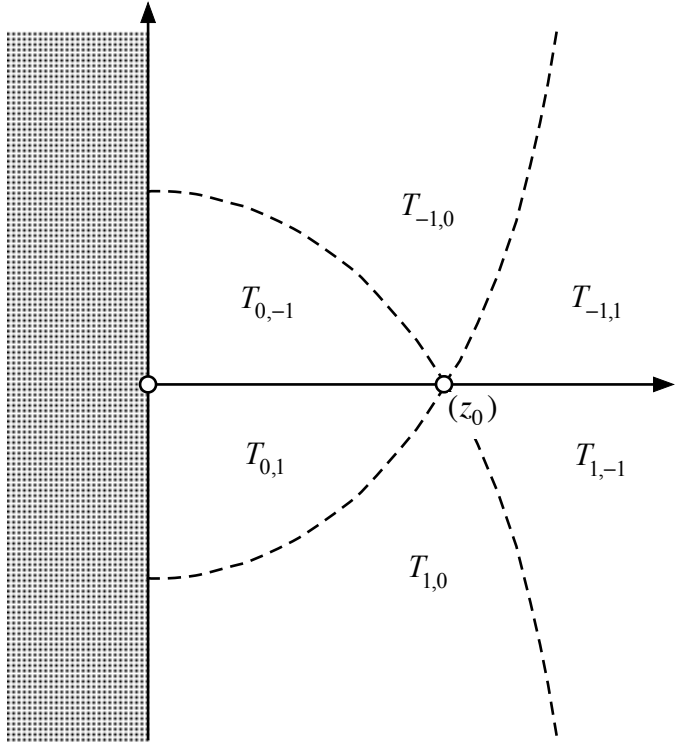}
  \caption{Regions $T_{j,k}$ in $z$ plane for ${u}$ positive.}
  \label{fig:fig1}
\end{figure}

We assume throughout this paper the following.

\begin{hypothesis}
\label{hyp:1} 
For each $z^{(j)}\in T_{j}\cap Z_{j}$ ( $j=0,\pm 1$) assume $z^{\left( 0\right) }\in Z_{1}\cap Z_{-1}$ and $z^{\left( \pm 1\right) }\in Z_{0}\cap Z_{\mp 1}$, i.e. for $j,k=0,\pm 1$ there is a path, consisting of a finite chain of $R_{2}$ arcs, linking $z^{\left( j\right) }$ with $z^{\left( k\right) }$ in $Z$ such as $z$ passes along the path from $z^{\left( j\right) }$ to $z^{\left( k\right) }$, the real part of $u\xi $ is monotonic (with $\xi$ varying continuously). 
\end{hypothesis} 

We now define certain terms which will appear in our expansions. Firstly
\begin{equation} 
\Phi (z) =\frac{4f(z) {f}^{\prime \prime }(z) -5{f}^{\prime 2}(z) }{16f^{3}(z) }+\frac{ g(z) }{f(z) }, 
\label{eq5} 
\end{equation}
and then from \cite{Dunster:2017:COA} the set of coefficients 
\begin{equation} 
\hat{F}_{1}(z) ={\tfrac{1}{2}}\Phi (z) ,\ \hat{F}_{2}(z) =-{\tfrac{1}{4}}f^{-1/2}(z) {\Phi }^{\prime }(z) , 
\label{eq8} 
\end{equation}
and 
\begin{equation} 
\hat{F}_{s+1}(z) =-\tfrac{1}{2}f^{-1/2}(z) \hat{F}_{s}^{\prime }(z) -\tfrac{1}{2}\sum\limits_{j=1}^{s-1}{\hat{F}_{j}(z) \hat{F}_{s-j}(z) }\ \left( {s=2,3,4\cdots } \right) . 
\label{eq9} 
\end{equation}

Next define 
\begin{equation} 
\hat{E}_{2s+1}(z) =\int {\hat{F}_{2s+1}(z) f^{1/2}(z) dz}\ \left( {s=0,1,2,\cdots }\right) , 
\label{eq7} 
\end{equation}
where the integration constants must be chosen so that each $\left( z-z_{0}\right)^{1/2}\hat{E}_{2s+1}(z) $ is meromorphic (non-logarithmic) at the turning point. 

Although (\ref{eq7}) holds for $2s+1$ replaced by $2s+2$, the even ones do not require an integration. Instead they can be determined via the formal expansion 
\begin{equation} 
\sum\limits_{s=1}^{\infty }{\dfrac{\hat{E}_{2s}(z) }{u^{2s}} \sim -}\frac{1}{2}\ln \left\{ 1+\sum\limits_{s=0}^{\infty }{\dfrac{\hat{F}_{2s+1}(z) }{u^{2s+2}}}\right\} +\sum\limits_{s=1}^{\infty }{ \dfrac{{\alpha}_{2s}}{u^{2s}}}, 
\label{even} 
\end{equation}
where each ${\alpha }_{2s}$ can be arbitrarily chosen.

We next define two sequences $\left\{ a_{s}\right\}_{s=1}^{\infty }$ and $ \left\{ \tilde{a}_{s}\right\}_{s=1}^{\infty }$ by $a_{1}=a_{2}=\frac{5}{72}$, $\tilde{a}_{1}=\tilde{a}_{2}=-{\frac{7}{72}}$, with subsequent terms $a_{s}$ and $\tilde{a}_{s}$ ($s=2,3,\cdots $) satisfying the same recursion formula 
\begin{equation} 
a_{s+1}=\tfrac{1}{2}\left( {s+1}\right) a_{s}+\tfrac{1}{2} \sum\limits_{j=1}^{s-1}{a_{j}a_{s-j}}. 
\label{arec} 
\end{equation}

Then let 
\begin{equation} 
\mathcal{E}_{s}(z) =\hat{E}_{s}(z)+(-1)^{s}a_{s}s^{-1}\xi ^{-s}, 
\label{eq40} 
\end{equation}
and 
\begin{equation} 
\tilde{\mathcal{E}}_{s}(z) =\hat{E}_{s}(z) +(-1)^{s}\tilde{a}_{s}s^{-1}\xi ^{-s}. 
\label{eq38} 
\end{equation}

The coefficients $a_{s}$ and $\tilde{a}_{s}$ are the ones that appear in the exponential form of the LG expansions for the Airy function and its derivative \cite[Appendix A]{Dunster:2020:SEB}. The following theorem demonstrates that these LG expansions are Borel summable, that is, the coefficients are $\mathcal{O}(K^s s!)$ as $s \rightarrow \infty$ for some $K>0$. The proof is given in \cref{secA}.

\begin{theorem}
\label{thm:as}
$a_{s}>0$ and $\tilde{a}_{s}<0$ for all $s$, and moreover 
\begin{equation} 
\tfrac{5}{36}\left( \tfrac{1}{2}\right) ^{s}s!\leq a_{s}\leq \tfrac{5}{36}\left( \tfrac{4453}{6912}\right) ^{s}s!, 
\label{eq19} 
\end{equation} 
and 
\begin{equation} 
\tfrac{7}{36}\left( \tfrac{1}{2}\right) ^{s}(s-1)!\leq \left\vert \tilde{a}_{s}\right\vert \leq \tfrac{7}{36}\left( \tfrac{1}{2}\right) ^{s}s!. 
\label{eq19a} 
\end{equation} 
\end{theorem}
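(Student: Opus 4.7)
The plan is to establish all four assertions by a single simultaneous strong induction on $s$, with base cases $s = 1,2$ verified directly: $a_1 = a_2 = 5/72$ gives equality in the left inequality of \eqref{eq19}, and $|\tilde{a}_1| = |\tilde{a}_2| = 7/72$ gives equality in the right inequality of \eqref{eq19a}.

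For the $a_s$ sequence, positivity is preserved trivially by \eqref{arec}, whose right-hand side is a nonnegative combination once $a_1,\dots,a_s$ are positive. The lower bound in \eqref{eq19} follows by discarding the nonnegative convolution sum in \eqref{arec} and applying the inductive hypothesis to $a_{s+1} \ge \tfrac{1}{2}(s+1) a_s$. For the upper bound, the identity $j!(s-j)! = s!/\binom{s}{j}$ together with the inductive estimate reduces the inductive step to the requirement
\[
K \ge \tfrac{1}{2} + \frac{5\,S(s)}{72(s+1)}, \qquad S(s) := \sum_{j=1}^{s-1}\binom{s}{j}^{-1} \qquad (s \ge 2).
\]
An elementary estimate for $S(s)$---isolating the endpoints $j=1,\,s-1$ and bounding each interior term by $\binom{s}{2}^{-1} = 2/(s(s-1))$---shows that $S(s)/(s+1)$ attains its maximum $1/6$ at $s = 2, 3$, so the threshold $K \ge 1/2 + 5/432$ is comfortably satisfied by $K = 4453/6912$.

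For the $\tilde{a}_s$ sequence the situation is subtler, because once the previous terms share a common sign the convolution in \eqref{arec} is \emph{positive}, opposing the negative first term; negativity is therefore not automatically inherited. The plan is to run a coupled induction maintaining simultaneously (a) $\tilde{a}_s < 0$, (b) $|\tilde{a}_s| \le \tfrac{7}{36}(1/2)^s s!$, and (c) $|\tilde{a}_s| \ge \tfrac{7}{36}(1/2)^s (s-1)!$. Once (a) is in hand at step $s$, the upper bound (b) at step $s+1$ is immediate from $|\tilde{a}_{s+1}| \le \tfrac{1}{2}(s+1)|\tilde{a}_s|$, since the convolution is \emph{subtracted} when $|\tilde{a}_{s+1}|$ is formed. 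For the negativity (a) and the lower bound (c) at step $s+1$, I would feed the lower bound (c) into the dominant term and the upper bound (b) into the convolution; using the identity $(s+1)(s-1)! - s! = (s-1)!$, both reduce to the single sufficient inequality $(7/36)\,s\,S(s) < 1$. The same endpoint-versus-interior split yields $sS(s) < 4$, and $(7/36)(4) = 7/9 < 1$ closes the induction.

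The main obstacle is the coupled induction for $\tilde{a}_s$: the lower bound (c) is precisely what allows the negative principal term of \eqref{arec} to dominate the positive convolution correction, and it must be propagated in lock-step with the sign (a) and the upper bound (b) at every step, since dropping any one of the three estimates breaks the closure of the other two.
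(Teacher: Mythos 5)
Your proposal is correct, and while it shares the paper's overall skeleton --- normalize $a_{s}=\tfrac{5}{36}2^{-s}c_{s}$, $\tilde{a}_{s}=-\tfrac{7}{36}2^{-s}\tilde{c}_{s}$ (implicitly, via the form of the bounds) and run an induction in which the convolution term is controlled by the normalized sum $S_{s}=\sum_{j=1}^{s-1}j!(s-j)!/(s-1)!=s\sum_{j=1}^{s-1}\binom{s}{j}^{-1}$ --- the key lemma is handled quite differently. The paper proves a bound on $S_{n}$ via the Beta-function integral representation $S_{n}=n(n+1)\sum_{j}B(j+1,n-j+1)$, integration by parts, and a maximum-modulus estimate, yielding (\ref{Snbound}); that bound is asymptotically sharp ($S_{n}\to 2$) but is poor for moderate $n$, which forces the paper to verify $\tilde{c}_{s}\geq(s-1)!$ numerically for $s=1,\dots,24$ before the analytic bound takes over. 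Your endpoint-versus-interior estimate $\sum_{j=1}^{s-1}\binom{s}{j}^{-1}\leq \tfrac{2}{s}+\tfrac{2(s-3)}{s(s-1)}$ gives $S_{s}<4$ uniformly for all $s\geq 2$ (the true maximum is $31/10$ at $s=6$), which already beats the needed threshold $S_{s}<36/7$ in the $\tilde{a}$ lower-bound step and the threshold $K\geq\tfrac12+\tfrac{5}{72}\max_{s}S_{s}/(s(s+1))=\tfrac{221}{432}<\tfrac{4453}{6912}$ in the $a$ upper-bound step. So your route is entirely elementary, needs no numerical verification, and in fact would yield a slightly sharper constant than $\tfrac{4453}{6912}$; what it gives up is only the asymptotic sharpness $S_{n}=2+\mathcal{O}(1/n)$, which the theorem does not require. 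One small point of care: in the $\tilde{a}$ upper bound, $|\tilde{a}_{s+1}|\leq\tfrac12(s+1)|\tilde{a}_{s}|$ presupposes $\tilde{a}_{s+1}<0$, so within each inductive step the sign and lower bound must be settled before the upper bound is read off --- you note the lock-step coupling, and the order of deductions you describe does close correctly.
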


The plan of the paper is as follows. In \cref{sec2} we obtain sharp error bounds for $z$ not too close to the turning point. In \cref{sec3} we use a similar method, along with Cauchy's integral formula, to obtain comparable bounds in a domain containing the turning point. This method involves majorizing the coefficients over a Cauchy integral contour. We overcome a problem of the coefficients having a large amplitude and oscillation over these contours by employing a modification of Cauchy's integral formula for meromorphic functions. Finally in \cref{sec4} we illustrate numerically the sharpness of our new bounds by applying them to solutions of Bessel's equation.

\section{Error bounds away from the turning point} 
\label{sec2} 

We begin by defining terms that appear in the error bounds. Let 
\begin{multline}
\omega_{n,j}(u,z) =2\int_{z^{(j)}}^{z}{\left\vert {
\hat{F}_{n}(t) f^{1/2}(t) dt}\right\vert } \\ 
+\sum\limits_{s=1}^{n-1}\dfrac{1}{\left\vert u\right\vert ^{s}}{
\int_{z^{(j)}}^{z}{\left\vert {\sum\limits_{k=s}^{n-1}{\hat{F}
_{k}(t) \hat{F}_{s+n-k-1}(t) }f^{1/2}(t) dt}\right\vert }},
\label{eq13}
\end{multline}

\begin{equation}
\varpi_{n,j}(u,z) =4\sum\limits_{s=0}^{n-2}\frac{1}{{
\left\vert u\right\vert ^{s}}}{\int_{z^{(j)}}^{z}{\left\vert {\hat{F}_{s+1}(t) f^{1/2}(t) dt}\right\vert }},
\label{eq14}
\end{equation}

\begin{equation} 
\gamma_{n}(u,\xi) =\frac{2a_{n}\Lambda_{n+1}}{\left\vert\xi \right\vert ^{n}}+\frac{1}{\left\vert u\right\vert \left\vert \xi\right\vert ^{n+1}}\sum\limits_{s=0}^{n-2}{\frac{\Lambda_{n+s+2}}{\left\vert {u\xi }\right\vert ^{s}}\sum\limits_{k=s+1}^{n-1}{a_{k}a_{s+n-k}}}, 
\label{eq28} 
\end{equation}

\begin{equation} 
{\beta }_{n}(u,\xi) =\frac{4}{\left\vert \xi \right\vert }\sum\limits_{s=0}^{n-2}{\frac{a_{s+1}\Lambda_{s+2}}{\left\vert {u\xi }\right\vert ^{s}}}, 
\label{eq29} 
\end{equation}

\begin{equation} 
\tilde{\gamma}_{n}(u,\xi) =\frac{2\left\vert \tilde{a}_{n}\right\vert \Lambda_{n+1}}{\left\vert \xi \right\vert ^{n}}+\frac{1}{\left\vert u\right\vert \left\vert \xi \right\vert ^{n+1}}\sum\limits_{s=0}^{n-2}{\frac{\Lambda_{n+s+2}}{\left\vert {u\xi }\right\vert ^{s}}\sum\limits_{k=s+1}^{n-1}\tilde{a}{_{k}\tilde{a}_{s+n-k}}}, 
\label{eq28a} 
\end{equation}

\begin{equation} 
{\tilde{\beta}}_{n}(u,\xi) =\frac{4}{\left\vert \xi\right\vert }\sum\limits_{s=0}^{n-2}{\frac{\left\vert \tilde{a}_{s+1}\right\vert \Lambda_{s+2}}{\left\vert {u\xi }\right\vert ^{s}}}, 
\label{eq29a} 
\end{equation}
where 
\begin{equation}
\Lambda_{n}=\dfrac{\pi ^{1/2}\Gamma \left( \frac{1}{2}n-\frac{1}{2}\right) }{2\Gamma \left( \frac{1}{2}n\right) }. 
\label{eq93} 
\end{equation}
The paths of integration in (\ref{eq13}) and (\ref{eq14}) are taken along $hat{\mathcal{L}}_{j}$.

We now define three solutions $w_{j}(u,z) $ ($j=0,\pm 1$) of (\ref{eq1}) having the unique properties 
\begin{equation} 
w_{j}(u,z) \sim f^{-1/4}(z) e^{-u \xi}\;(z\rightarrow z^{(j)}). 
\label{eq2a} 
\end{equation}

We can assume there exist connection coefficients $\lambda_{\pm 1}$ such that the following relation holds 
\begin{equation}
\lambda_{-1}w_{-1}(u,z) =iw_{0}(u,z) +\lambda_{1}w_{1}(u,z).
\label{eq15} 
\end{equation}
With $\lambda_{\pm 1}$ specified we then define the two constants 
\begin{equation}
\delta_{n,\pm 1}(u) =\lambda_{\pm 1}\exp \left\{ \sum\limits_{s=1}^{n-1}{\frac{{\left( {-1}\right) ^{s}}\hat{E}_{s}\left( {z^{\left( 0\right) }}\right) -\hat{E}_{s}\left( {z^{\left( \pm 1\right) }}\right) }{u^{s}}}\right\} -1, 
\label{exactdelta} 
\end{equation}
and as shown in \cite{Dunster:2020:SEB} these are $\mathcal{O}\left( u^{-n}\right) $ as $u\rightarrow \infty $ under \cref{hyp:1}.

If the connection coefficients $\lambda_{\pm 1}$ of (\ref{eq15}) are not known explicitly, in the error bounds that follow we instead can replace $|\delta_{n,\pm 1}(u)|$ with the bound 
\begin{equation}
\label{Om}
\left\vert \delta_{n,\pm 1}(u) \right\vert \leq \dfrac{2\Omega_{n}(u)}{\left\vert u\right\vert ^{n}-\Omega_{n}(u)}, 
\end{equation}
where 
\begin{equation}
{\Omega_{n}(u) =}\max_{j,k}\left[ \omega_{n,j}\left( u,z^{(k)}\right) \exp \left\{ {\left\vert u\right\vert^{-1}\varpi_{n,j}\left( u,z^{(k)}\right) +\left\vert u\right\vert ^{-n}\omega_{n,j}\left( u,z^{(k)}\right) }\right\} \right] . 
\end{equation}
Here the maximum is taken over $j,k\in \{0,\pm 1 \}$ such that $j\neq k$. In this case $|u|$ must be assumed to be sufficiently large so that the denominator of the RHS of (\ref{Om}) is positive; note that $\Omega_{n}(u)=\mathcal{O}(1)$ as $u \rightarrow \infty$.

Next define 
\begin{multline}
 d_{n}(u,z) =\exp \left\{ \sum\limits_{s=1}^{n-1}\mathrm{Re}{\dfrac{\mathcal{E}_{s}(z) }{u^{s}}}\right\} e_{n,j}(u,z) \left\{ 1+\dfrac{e_{n,j}(u,z) }{2{\left\vert u\right\vert ^n}}\right\} ^{2} \\+\exp \left\{ \sum\limits_{s=1}^{n-1}{\left( {-1}\right) ^{s}\mathrm{Re}\dfrac{\mathcal{E}_{s}(z) }{u^{s}}}\right\} e_{n,k}(u,z) \left\{ 1+\dfrac{e_{n,k}(u,z) }{2{\left\vert u\right\vert ^{n}}}\right\} ^2, 
\label{dB} 
\end{multline}
and
\begin{multline}
 \tilde{d}_{n}(u,z) =\exp \left\{ \sum\limits_{s=1}^{n-1}\mathrm{Re}{\dfrac{\mathcal{\tilde{E}}_{s}(z) }{u^{s}}}\right\} \tilde{e}_{n,j}(u,z) \left\{ 1+\dfrac{\tilde{e}_{n,j}(u,z) }{2{\left\vert u\right\vert ^{n}}}\right\} ^{2} \\+\exp \left\{ \sum\limits_{s=1}^{n-1}\left( {-1}\right) ^{s}{\mathrm{Re}\dfrac{\mathcal{\tilde{E}}_{s}(z) }{u^{s}}}\right\} \tilde{e}_{n,k}(u,z) \left\{ 1+\dfrac{\tilde{e}_{n,k}(u,z) }{2{\ \left\vert u\right\vert ^{n}}}\right\} ^{2}, 
\label{dA} 
\end{multline}
 where
 \begin{multline}
 e_{n,j}(u,z) ={\left\vert u\right\vert ^{n}}\left\vert \delta_{n,j}(u) \right\vert  +\omega_{n,j}(u,z) \exp \left\{ {\left\vert u\right\vert ^{-1}\varpi_{n,j}(u,z) +\left\vert u\right\vert ^{-n}\omega_{n,j}(u,z) }\right\} \\ +\gamma_{n}(u,\xi) \exp \left\{ {\left\vert u\right\vert^{-1}\beta_{n}(u,\xi) +\left\vert u\right\vert ^{-n}\gamma_{n}(u,\xi) }\right\}, 
\label{eq100} 
\end{multline}
and
\begin{multline}
 \tilde{e}_{n,j}(u,z) ={\left\vert u\right\vert ^{n}}\left\vert \delta_{n,j}(u) \right\vert +\omega_{n,j}(u,z) \exp \left\{ {\left\vert u\right\vert ^{-1}\varpi_{n,j}(u,z) +\left\vert u\right\vert ^{-n}\omega_{n,j}(u,z) }\right\} \\ +\tilde{\gamma}_{n}(u,\xi) \exp \left\{ {\left\vert u\right\vert ^{-1}\tilde{\beta}_{n}(u,\xi) +\left\vert u\right\vert ^{-n}\tilde{\gamma}_{n}(u,\xi) }\right\}.
\label{eq103} 
\end{multline}
 In (\ref{dA}) and (\ref{dB}) $j=\pm 1$, $k=0$ for $z\in T_{0,\pm 1}\cup T_{\pm 1,0}$, and $j=\pm 1$, $k=\mp 1$ for $z\in T_{\pm 1,\mp 1}$. 
 
 Our main result reads as follows.
 \begin{theorem}
 \label{thm:far}
 Assume \cref{hyp:1}, and let $z\in Z_{j}\cap Z_{k}$ ($j,k\in \left\{ 0,1,-1\right\} $, $j\neq k$). Then for positive integers $m$ and $r$ the differential equation (\ref{eq1}) has solutions 
\begin{equation}
w_{m,l}(u,z) =\mathrm{Ai}_{l}\left( u^{2/3}\zeta\right) \mathcal{A}_{2m+2}(u,z) +\mathrm{Ai}_{l}^{\prime }\left( u^{2/3}\zeta\right) \mathcal{B}_{2m+1}(u,z) \ (l=0,\pm 1), 
\label{wjs} 
\end{equation}
where 
\begin{multline} \left\{ \dfrac{f(z) }{\zeta }\right\} ^{1/4}\mathcal{A}_{2m+2}(u,z) ={\exp \left\{ \sum\limits_{s=1}^{m}{\dfrac{\mathcal{\tilde{E}}_{2s}(z) }{u^{2s}}}\right\} \cosh \left\{ \sum\limits_{s=0}^{m}{\dfrac{\mathcal{\tilde{E}}_{2s+1}(z) }{u^{2s+1}}}\right\} }\\+\dfrac{1}{2}\tilde{\varepsilon}_{2m+2,r}(u,z), 
\label{1.1} 
\end{multline}

\begin{multline} u^{1/3}\left\{ \zeta f(z) \right\} ^{1/4}\mathcal{B}_{2m+1}(u,z) ={\exp \left\{ \sum\limits_{s=1}^{m}{\dfrac{\mathcal{E}_{2s}(z) }{u^{2s}}}\right\} \sinh \left\{ \sum\limits_{s=0}^{m-1}{\dfrac{\mathcal{E}_{2s+1}(z) }{u^{2s+1}}}\right\} }\\+\dfrac{1}{2}\varepsilon_{2m+1,r}(u,z), 
\label{1.2} 
\end{multline}
in which for $z \neq z_{0}$
\begin{multline}
 \left\vert \tilde{\varepsilon}_{2m+2,r}(u,z) \right\vert \leq {\exp \left\{ \sum\limits_{s=1}^{2m+1}\mathrm{Re}{\dfrac{\mathcal{\tilde{E}}_{s}(z) }{u^{s}}}\right\} }\left\vert \exp \left\{ \sum\limits_{s=2m+2}^{2m+2r+1}{\dfrac{\mathcal{\tilde{E}}_{s}(z) }{u^{s}}}\right\} {-1}\right\vert \\ +\exp \left\{ \sum\limits_{s=1}^{2m+1}(-1) ^{s}{\mathrm{Re}\dfrac{\mathcal{\tilde{E}}_{s}(z) }{u^{s}}}\right\} \left\vert\exp \left\{ \sum\limits_{s=2m+2}^{2m+2r+1}(-1) ^{s}{\dfrac{\mathcal{\tilde{E}}_{s}(z) }{u^{s}}}\right\} {-1}\right\vert +\frac{\tilde{d}_{2m+2r+2}(u,z) }{\left\vert u\right\vert^{2m+2r+2}}, 
\label{1.3} 
\end{multline}
 and 
\begin{multline}
 \left\vert \varepsilon_{2m+1,r}(u,z) \right\vert \leq {\exp\left\{ \sum\limits_{s=1}^{2m}\mathrm{Re}{\dfrac{\mathcal{E}_{s}\left(z\right) }{u^{s}}}\right\} }\left\vert \exp \left\{ \sum\limits_{s=2m+1}^{2m+2r+1}{\dfrac{\mathcal{E}_{s}(z) }{u^{s}}}\right\} {-1}\right\vert \\ +{\exp \left\{ {\sum\limits_{s=1}^{2m}}(-1) ^{s}\mathrm{Re}{\dfrac{\mathcal{E}_{s}(z) }{u^{s}}}\right\} }\left\vert {\exp\left\{ \sum\limits_{s=2m+1}^{2m+2r+1}(-1) ^{s}{\dfrac{\mathcal{E}_{s}(z) }{u^{s}}}\right\} {-1}}\right\vert +\frac{d_{2m+2r+2}(u,z) }{\left\vert u\right\vert ^{2m+2r+2}}. 
\label{1.4} 
\end{multline}

\end{theorem}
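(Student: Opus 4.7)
The plan is to derive the Airy representations~\eqref{wjs} together with the sharp bounds~\eqref{1.3}--\eqref{1.4} by combining the exponential-form Liouville--Green (LG) expansions of the solutions $w_j(u,z)$ of~\eqref{eq2a} with the exponential-form LG expansions of the Airy functions, and then extracting the sharp form by a factorization applied on top of the elementary bound of \cite{Dunster:2020:SEB}.

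First I would invoke from \cite{Dunster:2020:SEB} the exponential LG representation
\begin{equation*}
w_j(u,z) = f^{-1/4}(z)\exp\left\{-u\xi+\sum_{s=1}^{n-1}\frac{(-1)^s\hat E_s(z)}{u^s}\right\}\bigl(1+\eta_{n,j}(u,z)\bigr),
\end{equation*}
in which $|\eta_{n,j}|$ is controlled by the quantities $\omega_{n,j},\varpi_{n,j}$ of~\eqref{eq13}--\eqref{eq14}. In parallel I would record the corresponding exponential LG expansions of $\mathrm{Ai}_l(u^{2/3}\zeta)$ and $\mathrm{Ai}_l'(u^{2/3}\zeta)$, whose exponents contain respectively $(-1)^s\tilde a_s/(s\xi^s)$ and $(-1)^s a_s/(s\xi^s)$ -- exactly the corrections incorporated in $\tilde{\mathcal{E}}_s$ and $\mathcal{E}_s$ by~\eqref{eq38} and~\eqref{eq40}. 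The Airy LG errors are governed by $\gamma_n,\beta_n$ and $\tilde\gamma_n,\tilde\beta_n$, whose defining series converge thanks to the factorial bounds of \cref{thm:as}. Dividing a $w_j$-expansion by the corresponding Airy asymptotic therefore yields a clean exponential with exponent $\sum_s\mathcal{E}_s(z)/u^s$ (or its tilded version), multiplied by a product of two error factors of size $1+O(u^{-n})$.

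Next, using the connection relation~\eqref{eq15} together with the classical Airy identity $\mathrm{Ai}_0+e^{2\pi i/3}\mathrm{Ai}_1+e^{-2\pi i/3}\mathrm{Ai}_{-1}=0$ and the Wronskians, I would combine the pair $(w_j,w_k)$ selected by the subregion $T_{j,k}$ containing $z$ into the Airy combination on the right of~\eqref{wjs}; this choice of pair is precisely what ensures that both $w_j$ and $w_k$ enjoy small LG errors at $z$. The constants $\delta_{n,\pm 1}(u)$ of~\eqref{exactdelta} arise as the residual mismatch dictated by the normalization~\eqref{eq2a} at the reference points $z^{(j)}$; \cref{hyp:1} enters here because it guarantees an admissible connecting path between these points along which $\mathrm{Re}(u\xi)$ is monotone. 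The $\cosh$/$\sinh$ structure in~\eqref{1.1}--\eqref{1.2} is the even/odd symmetrization in $u\xi$ between the two LG branches $e^{\pm u\xi}$ that become coefficients of $\mathrm{Ai}_l$ and $\mathrm{Ai}_l'$ respectively, and the alternating $(-1)^s$ appearing in the second exponential of~\eqref{1.3}--\eqref{1.4} and in~\eqref{dA}--\eqref{dB} reflects precisely this symmetrization.

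Finally, to sharpen the bound I would factor
\begin{equation*}
\exp\left\{\sum_{s=1}^{2m+2r+1}\frac{\mathcal{E}_s}{u^s}\right\}=e^P e^T,
\end{equation*}
with $P$ the partial sum appearing inside the $\cosh$/$\sinh$ of~\eqref{1.1}--\eqref{1.2} (with matching signs for each branch) and $T$ the tail $\sum_{s\ge 2m+1}\mathcal{E}_s/u^s$. The identity $e^{P+T}=e^P+e^P(e^T-1)$, applied to each of the two branches, produces the first two terms of~\eqref{1.3}--\eqref{1.4} after bounding $|e^P|\le e^{\mathrm{Re}\,P}$. The last term $d_n/|u|^n$ (respectively $\tilde d_n/|u|^n$) comes from the remaining $1+O(u^{-n})$ error factors, and the quadratic $(1+e_{n,j}/(2|u|^n))^2$ in~\eqref{dA}--\eqref{dB} arises because the two independent error sources -- the $w_j$-LG error and the Airy-LG error -- combine as $(1+\epsilon_1)(1+\epsilon_2)\le(1+\tfrac12(\epsilon_1+\epsilon_2))^2$. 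The main obstacle is the careful bookkeeping required to verify that the four error contributions -- the $w_j$-LG error via $\omega_{n,j},\varpi_{n,j}$, the Airy-LG error via $\gamma_n,\beta_n$ (or their tildes), the connection mismatch $|\delta_{n,j}|$, and their cross products -- combine exactly into the compact expressions $e_{n,j},\tilde e_{n,j}$ of~\eqref{eq100}--\eqref{eq103}, and hence into $d_n,\tilde d_n$ of~\eqref{dA}--\eqref{dB}.
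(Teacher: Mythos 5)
Your final paragraph contains exactly the idea the paper uses, and the overall argument is sound; the difference is one of economy rather than substance. The paper's proof is essentially a three-step corollary of the earlier work: it quotes \cite[Thm.~3.4]{Dunster:2020:SEB} verbatim with $m$ replaced by $m+r$ (which already supplies the solutions (\ref{1.5}), the representations (\ref{oldA})--(\ref{oldB}), and the elementary bounds (\ref{1.5a})--(\ref{1.5b})), then \emph{relabels} $\mathcal{A}_{2m+2r+2}$ and $\mathcal{B}_{2m+2r+2}$ as $\mathcal{A}_{2m+2}$ and $\mathcal{B}_{2m+1}$, so that $\tilde{\varepsilon}_{2m+2,r}$ and $\varepsilon_{2m+1,r}$ are \emph{defined} by (\ref{1.7}) and (\ref{1.9}) as the difference of the two truncations plus the old remainder, and finally applies precisely your factorization $e^{P+T}-e^{P}=e^{P}(e^{T}-1)$ to each of the two branches of $2e^{E}\cosh O=e^{E+O}+e^{E-O}$ (respectively $\sinh$), followed by the triangle inequality and $|e^{P}|\leq e^{\mathrm{Re}\,P}$. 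Your first two paragraphs propose to rebuild that cited theorem from scratch --- the LG expansions of $w_{j}$ and of the Airy functions, the connection relation (\ref{eq15}), and the assembly of $e_{n,j}$, $\tilde{e}_{n,j}$, $d_{n}$, $\tilde{d}_{n}$ --- none of which is needed here, and your heuristic for the squared factor $\left\{1+e_{n,j}/(2|u|^{n})\right\}^{2}$ is not quite how those bounds are structured (the same $e_{n,j}$ appears in both the linear and the squared factor, not two distinct error sources averaged); but since (\ref{1.5a})--(\ref{1.5b}) are imported wholesale, this is immaterial to the correctness of the result. In short: your sharpening step is the paper's proof; the surrounding re-derivation is superfluous and is where any real risk of error would lie if it had to be carried out in detail.
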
 

\begin{proof}
From \cite[Thm. 3.4]{Dunster:2020:SEB} replacing $m$ by $m+r$ we have the following three solutions of (\ref{eq1}) 
\begin{equation}
\mathrm{\ Ai}_{l}\left( u^{2/3}\zeta\right) \mathcal{A}_{2m+2r+2}(u,z) +\mathrm{Ai}_{l}^{\prime }\left( u^{2/3}\zeta\right) \mathcal{B}_{2m+2r+2}(u,z) \ (l=0,\pm 1), 
\label{1.5} 
\end{equation}
where 
\begin{multline}
 \left\{ \dfrac{f(z) }{\zeta }\right\} ^{1/4}\mathcal{A}_{2m+2r+2}(u,z) ={\exp \left\{ \sum\limits_{s=1}^{m+r}{\dfrac{\mathcal{\tilde{E}}_{2s}(z) }{u^{2s}}}\right\} \cosh \left\{ \sum\limits_{s=0}^{m+r}{\dfrac{\mathcal{\tilde{E}}_{2s+1}(z) }{u^{2s+1}}}\right\} } \\ +\dfrac{1}{2}\tilde{\varepsilon}_{2m+2r+2}(u,z) , 
\label{oldA} 
\end{multline}
 and 
\begin{multline}
 u^{1/3}\left\{ \zeta f(z) \right\} ^{1/4}\mathcal{B}_{2m+2r+2}(u,z) ={\exp \left\{ \sum\limits_{s=1}^{m+r}{\dfrac{\mathcal{E}_{2s}(z) }{u^{2s}}}\right\} \sinh \left\{ \sum\limits_{s=0}^{m+r}{\dfrac{\mathcal{E}_{2s+1}(z) }{u^{2s+1}}}\right\} } \\ +\dfrac{1}{2}\varepsilon_{2m+2r+2}(u,z) . 
\label{oldB} 
\end{multline}

 The error terms $\tilde{\varepsilon}_{2m+2r+2}(u,z) $ and $\varepsilon_{2m+2r+2}(u,z) $ are bounded by \cite[Eqs. (3.29) - (3.32)]{Dunster:2020:SEB} and in particular 
\begin{equation}
\left\vert \tilde{\varepsilon}_{2m+2r+2}(u,z) \right\vert \leq \frac{\tilde{d}_{2m+2r+2}(u,z) }{\left\vert u\right\vert^{2m+2r+2}}, 
\label{1.5a} 
\end{equation}
and 
\begin{equation}
\left\vert \varepsilon_{2m+2r+2}(u,z) \right\vert \leq \frac{d_{2m+2r+2}(u,z) }{\left\vert u\right\vert ^{2m+2r+2}}. 
\label{1.5b} 
\end{equation}
Then we relabel $\mathcal{A}_{2m+2r+2}(u,z) $ by $\mathcal{A}_{2m+2}(u,z) $, and then recast it in the form 
\begin{multline}
 \left\{ \dfrac{f(z) }{\zeta }\right\} ^{1/4}\mathcal{A}_{2m+2}(u,z) ={\exp \left\{ \sum\limits_{s=1}^{m}{\dfrac{ \mathcal{\tilde{E}}_{2s}(z) }{u^{2s}}}\right\} \cosh \left\{ \sum\limits_{s=0}^{m}{\dfrac{\mathcal{\tilde{E}}_{2s+1}(z) }{ u^{2s+1}}}\right\} } \\ +\dfrac{1}{2}\tilde{\varepsilon}_{2m+2,r}(u,z) , 
\label{1.6} 
\end{multline}
 Comparing this to (\ref{oldA}) implies that 
\begin{multline}
 \tilde{\varepsilon}_{2m+2,r}(u,z) =2{\exp \left\{  \sum\limits_{s=1}^{m+r}{\dfrac{\mathcal{\tilde{E}}_{2s}(z) }{ u^{2s}}}\right\} \cosh \left\{ \sum\limits_{s=0}^{m+r}{\dfrac{\mathcal{ \tilde{E}}_{2s+1}(z) }{u^{2s+1}}}\right\} } \\ -2{\exp \left\{\sum\limits_{s=1}^{m}{\dfrac{\mathcal{\tilde{E}}_{2s}(z) }{u^{2s}}}\right\} \cosh \left\{ \sum\limits_{s=0}^{m}{\dfrac{ \mathcal{\tilde{E}}_{2s+1}(z) }{u^{2s+1}}}\right\} }+\tilde{ \varepsilon}_{2m+2r+2}(u,z) . 
\label{1.7} 
\end{multline}

 Similarly on relabeling $\mathcal{B}_{2m+2r+2}(u,z)$ by $\mathcal{B}_{2m+1}(u,z)$ and following the same procedure yields 
\begin{multline}
 u^{1/3}\left\{ \zeta f(z) \right\} ^{1/4}\mathcal{B}_{2m+1}(u,z) ={\exp \left\{ \sum\limits_{s=1}^{m}{\dfrac{ \mathcal{E}_{2s}(z) }{u^{2s}}}\right\} \sinh \left\{ { \sum\limits_{s=0}^{m-1}{\dfrac{\mathcal{E}_{2s+1}(z) }{u^{2s+1}}} }\right\} } \\ +\dfrac{1}{2}\varepsilon_{2m+1,r}(u,z), 
\label{1.8} 
\end{multline}
 where 
\begin{multline}
 \varepsilon_{2m+1,r}(u,z) =2{\exp \left\{\sum \limits_{s=1}^{m+r}{\dfrac{\mathcal{E}_{2s}(z) }{u^{2s}}} \right\} \sinh \left\{ \sum\limits_{s=0}^{m+r}{\dfrac{\mathcal{E}_{2s+1}(z) }{u^{2s+1}}}\right\} } \\ -2{\exp \left\{ \sum\limits_{s=1}^{m}{\dfrac{\mathcal{E}_{2s}(z) }{u^{2s}}}\right\} \sinh \left\{ {\sum\limits_{s=0}^{m-1}\dfrac{ \mathcal{E}_{2s+1}(z) }{u^{2s+1}}}\right\} }+\varepsilon_{2m+2r+2}(u,z) . 
\label{1.9} 
\end{multline}

 Now we write (\ref{1.7}) in the form 
\begin{multline}
 \tilde{\varepsilon}_{2m+2,r}(u,z) ={\exp \left\{ \sum\limits_{s=1}^{2m+1}{\dfrac{\mathcal{\tilde{E}}_{s}(z) }{ u^{s}}}\right\} }\left[ \exp \left\{ \sum\limits_{s=2m+2}^{2m+2r+1}{\dfrac{ \mathcal{\tilde{E}}_{s}(z) }{u^{s}}}\right\} {-1}\right] \\ +{\exp \left\{ {\sum\limits_{s=1}^{2m+1}}(-1) ^{s}{\dfrac{ \mathcal{\tilde{E}}_{s}(z) }{u^{s}}}\right\} }\left[ \exp \left\{ \sum\limits_{s=2m+2}^{2m+2r+1}(-1) ^{s}{\dfrac{\mathcal{ \tilde{E}}_{s}(z) }{u^{s}}}\right\} {-1}\right] +\tilde{ \varepsilon}_{2m+2r+2}(u,z) . 
\label{1.10} 
\end{multline}
On taking absolute values of both sides, and using the triangle inequality along with (\ref{1.5a}) yields (\ref{1.3}). The bound (\ref{1.4}) follows similarly from (\ref{1.5b}) and (\ref{1.8}). 
\end{proof}

\begin{remark}
\label{remark2}
For each of the terms in the new bounds that involve the difference between $1$ and an exponential having a small argument we can numerically make use of the identity 
\begin{equation} e^{w}-1=w+\tfrac{1}{2}w^{2}+w^{3}g(w),
\label{e1} 
\end{equation}
where 
\begin{equation}
g(w) =\sum_{n=0}^{\infty }\frac{w^{n}}{\left( n+3\right) !}= \frac{2e^{w}-2-2w-w^{2}}{2w^{3}}.
\label{e2} 
\end{equation}
From its Maclaurin series we observe that $g(w) $ is monotonically increasing for positive $w$, and moreover 
\begin{equation}
\left\vert g(w) \right\vert \leq \sum_{n=0}^{\infty }\frac{ \left\vert w\right\vert ^{n}}{\left( n+3\right) !}=g\left( \left\vert w\right\vert \right).
\label{e3} 
\end{equation}
Hence for $\left\vert w\right\vert \leq w_{0}$ we have $\left\vert g(w) \right\vert \leq g\left( \left\vert w\right\vert \right) \leq g\left( w_{0}\right) $ and therefore 
\begin{equation}
\left\vert e^{w}-1\right\vert \leq \left\vert w\right\vert +\tfrac{1}{2} \left\vert w\right\vert ^{2}+\left\vert w\right\vert ^{3}\left\vert g(w) \right\vert \leq \left\vert w\right\vert +\tfrac{1}{2}\left\vert w\right\vert ^{2}+\left\vert w\right\vert ^{3}g\left( w_{0}\right).
\label{e4} 
\end{equation}
For example if $\left\vert w\right\vert \leq 1$, we use $g(1) =0.218\cdots $, and so we have 
\begin{equation}
\left\vert e^{w}-1\right\vert \leq \left\vert w\right\vert +\tfrac{1}{2} \left\vert w\right\vert ^{2}+\left( 0.218\cdots \right) \left\vert w\right\vert ^{3}.
\label{e5} 
\end{equation}

\end{remark}
 
\begin{remark}
From (\ref{1.3}) and (\ref{1.4}) we observe that 
\begin{equation}
\tilde{\varepsilon}_{2m+2,r}(u,z) =\mathcal{O}\left( u^{-2m-2}\right),
\label{1.11} 
\end{equation}
and 
\begin{equation}
\varepsilon_{2m+1,r}(u,z) =\mathcal{O}\left( u^{-2m-1}\right),
\label{1.12} 
\end{equation}
as $u\rightarrow \infty $ for $z\in Z_{j}\cap Z_{k}$, where the pair $(j,k)$ can take any value from $(0,1)$, $(-1,0)$ and $(-1,1)$. Note the bounds do not exist at the turning point, since $\Phi(z)$ has a pole at that point (see (\ref{eq5})), and hence the integrals in (\ref{eq13}) and (\ref{eq13}) diverge there. Thus the bounds hold uniformly in $Z_{j}\cap Z_{k}$ with a neighborhood of $z=z_{0}$ removed. In the next section we obtain similar sharp bounds that are valid at the turning point.

\end{remark} 

\begin{remark}
The choice of $r$ controls the accuracy of the bounds, with a larger value generally giving sharper values relative to the exact error. However if it is taken too large the accuracy may deteriorate, since the series is asymptotic with finite accuracy possible.

In the unmodified case of \cite{Dunster:2020:SEB} ($r=0$) the bound on $\varepsilon_{2m+2}(u,z)$, while of the correct order of magnitude, can significantly overestimate the exact error. This is primarily because it involves the suprema of functions having high amplitudes and oscillations. This is typical for these type of error bounds. For $\tilde{\varepsilon}_{2m+2,r}(u,z)$ this is exacerbated by the fact that the original bound in of \cite{Dunster:2020:SEB} overestimates the exact error by a factor $\mathcal{O}(u)$. Of course these and other error bounds are still important as they rigorously establish the veracity of the asymptotic approximations in question. See \cite{Olver:1980:AAA} for a discussion on the importance of explicit error bounds, even if they are hard to compute, or are computable but not sharp.

While the original bounds of \cite{Dunster:2020:SEB} do still appear in the new bounds (the $d$ and $\tilde{d}$ terms in (\ref{1.3}) and (\ref{1.4})), they have been "tamed" by a reduction of a factor $\mathcal{O}(u^{-2r})$. In addition, as we shall demonstrate in \cref{sec4}, the terms preceding them in these bounds are close in absolute value to the exact error.
\end{remark}

\section{Error bounds close to the turning point} 
\label{sec3} 

Let $\Gamma $ be the circle $\{z:\left\vert z-z_{0}\right\vert =r_{0}\} $ for $r_{0}>0$ is arbitrarily chosen but not too small, and such that the loop lies in the intersection of $Z_{0}$, $Z_{1}$, and $Z_{-1}$. In addition, let $\gamma_{j,l}$ be the union of part of the loop $\Gamma $ that lies in $T_{j,l}$ ($j,l\in \left\{ 0,1,-1\right\} ,j\neq l$) with an arbitrarily chosen progressive path in $T_{j}$ connecting $\Gamma $ to $ z^{\left( j\right) }$ (if possible a straight line). There are six such paths, examples of two of which are shown in \Cref{fig:fig2,fig:fig3}. In these figures $\mathrm{Re}\, z \geq 0$, $u >0$, $z_{0}>0$, $z^{(0)}$ is an admissible pole at the origin, and $z^{(1)}$ is at infinity.

\begin{figure}
  \centering
  \includegraphics{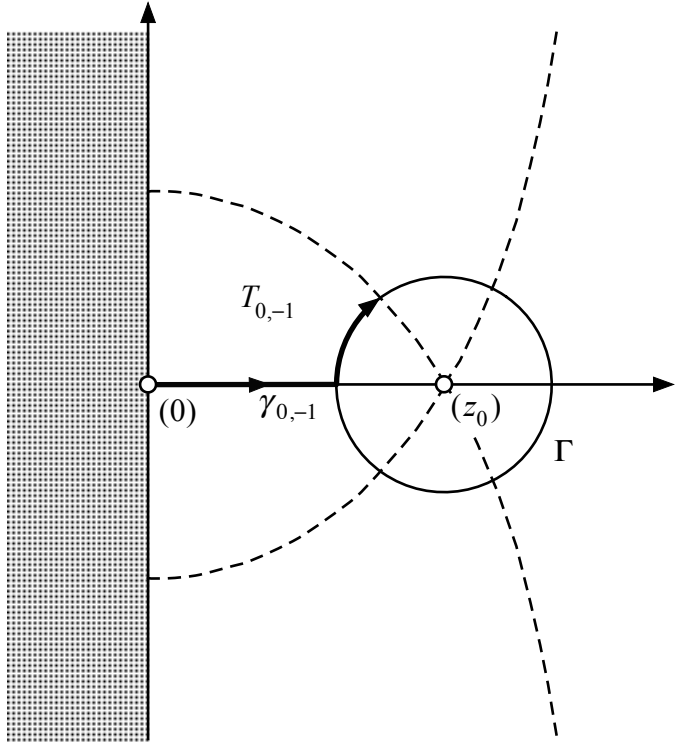}
  \caption{Path $\gamma_{0,-1}$ in the $z$ plane.}
  \label{fig:fig2}
\end{figure}

\begin{figure}
  \centering
  \includegraphics{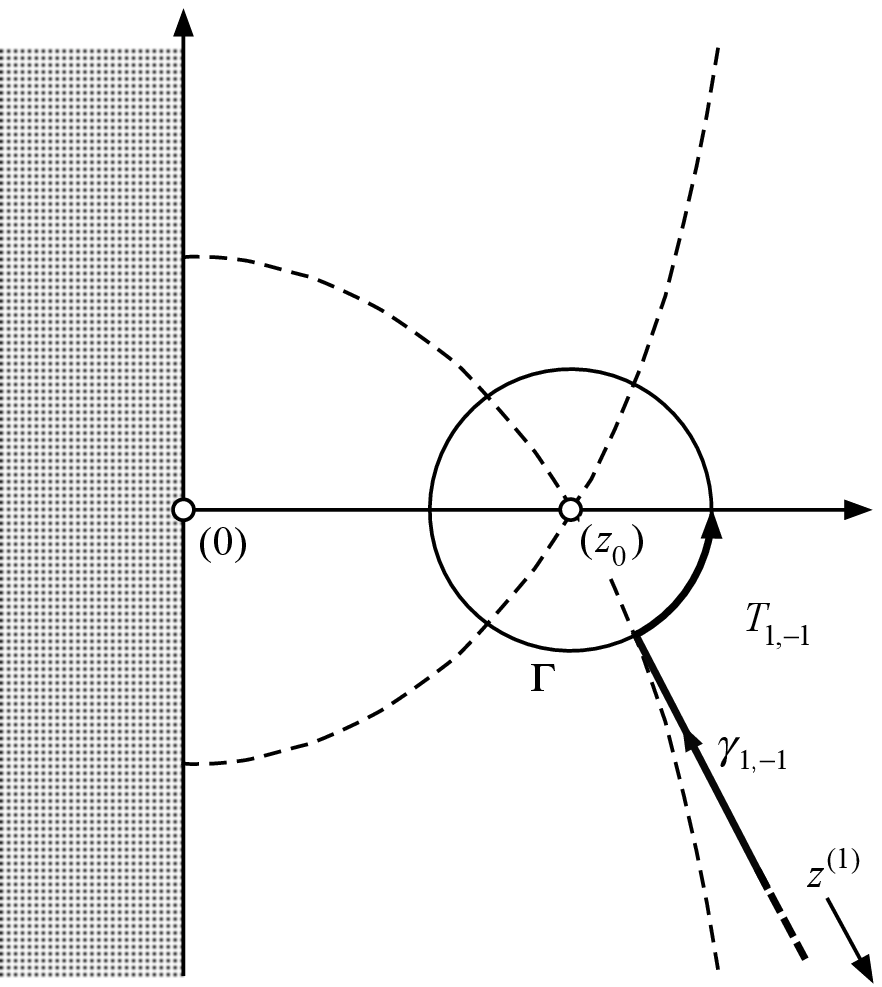}
  \caption{Path $\gamma_{1,-1}$ in the $z$ plane.}
  \label{fig:fig3}
\end{figure}

We now define 
\begin{multline}
 \omega_{n}(u) =2\max_{j,l}\left\{ \int_{\gamma_{j,l}}{ \left\vert {\hat{F}_{n}(t) f^{1/2}(t) dt} \right\vert }\right\} \\ +\sum\limits_{s=1}^{n-1}\dfrac{1}{\left\vert u\right\vert ^{s}}{\ \sum\limits_{k=s}^{n-1}\max_{j,l}}\left\{ {\int_{\gamma_{j,l}}{\left\vert {{ \hat{F}_{k}(t) \hat{F}_{s+n-k-1}(t) } f^{1/2}(t) dt}\right\vert }}\right\} , 
\label{eq68} 
\end{multline}
 and 
\begin{equation}
\varpi_{n}(u) =4\sum\limits_{s=0}^{n-2}\frac{1}{\left\vert u\right\vert ^{s}}{\max_{j,l}}\left\{ {\int_{\gamma_{j,l}}{\left\vert { \hat{F}_{s+1}(t) f^{1/2}(t) dt}\right\vert }} \right\} , 
\label{eq69} 
\end{equation}
where the maxima are taken over all six paths $\gamma_{j,l}$

Next 
\begin{equation}
\delta_{n}(u) =\max_{j=\pm 1}\left\vert \delta_{n,j}(u) \right\vert , 
\label{deltan} 
\end{equation}

\begin{equation}
\Upsilon =\underset{z\in \Gamma }{\inf }\left\vert \zeta f(z) \right\vert ^{1/4},\ \tilde{\Upsilon}=\underset{z\in \Gamma }{\sup } \left\vert \zeta /f(z) \right\vert ^{1/4}, 
\label{zeds} 
\end{equation}
and 
\begin{equation}
\rho =\underset{z\in \Gamma }{\inf }\left\vert \xi \right\vert . 
\label{rho} 
\end{equation}

Let $\theta =\arg u$, and we further define 
\begin{equation}
M_{s}=\underset{z\in \Gamma }{\sup }\,\mathrm{Re}\left\{ e^{-is\theta } \mathcal{E}_{s}(z) \right\} ,\ N_{s}=\underset{z\in \Gamma }{ \sup}\,\mathrm{Re}\left\{ {\left( {-1}\right) ^{s}}e^{-is\theta } \mathcal{E}_{s}(z) \right\}, 
\label{eq96} 
\end{equation}
and 
\begin{equation}
 d_{n}(u) =\left[ \exp \left\{ \sum\limits_{s=1}^{n-1}{\ \dfrac{ M_{s}}{\left\vert u\right\vert ^{s}}}\right\} +\exp \left\{ \sum\limits_{s=1}^{n-1}{\dfrac{N_{s}}{\left\vert u\right\vert ^{s}}} \right\} \right] e_{n}(u) \left\{ 1+\dfrac{e_{n}(u) }{2{ \left\vert u\right\vert ^{n}}}\right\} ^{2}, 
\label{d2m+1}
\end{equation}
 where $e_{n}(u) =\mathcal{O}(1) $ as $ u\rightarrow \infty $ and is given by 
\begin{multline}
 e_{n}(u) =\left\vert u\right\vert ^{n}\delta_{n}(u) +\omega_{n}(u) \exp \left\{ {\left\vert u\right\vert ^{-1}\varpi_{n}(u) +\left\vert u\right\vert ^{-n}\omega_{n}(u) }\right\} \\ +\gamma_{n}(u,\rho)\exp \left\{ {\left\vert u\right\vert^{-1}\beta_{n}(u,\rho)+\left\vert u\right\vert^{-n}\gamma_{n}(u,\rho)}\right\} . 
\label{en} 
\end{multline}

In addition, let $\tilde{M}_{s}$ and $\tilde{N}_{s}$ be given by (\ref{eq96}), except with $\mathcal{E}_{s}$ replaced by $\tilde{\mathcal{E}}_{s}$. Next let 
\begin{equation}
 \tilde{d}_{n}(u) =\left[ \exp \left\{ \sum\limits_{s=1}^{n-1}{ \dfrac{\tilde{M}_{s}}{\left\vert u\right\vert ^{s}}}\right\} +\exp \left\{ \sum\limits_{s=1}^{n-1}{\dfrac{\tilde{N}_{s}}{\left\vert u\right\vert ^{s}} }\right\} \right] \tilde{e}_{n}(u) \left\{ 1+\dfrac{\tilde{e}_{n}\left( {u }\right) }{2{\left\vert u\right\vert^{n}}}\right\} ^{2}, 
\label{d2m+2} 
\end{equation}
 where $\tilde{e}_{n}(u)$ is given by (\ref{en}) with $\gamma_{n}(u,\rho)$ and $\beta_{n}(u,\rho)$ replaced by $\tilde{\gamma}_{n}(u,\rho)$ and $\tilde{\beta}_{n}(u,\rho)$, respectively (see (\ref{eq28a}) and (\ref{eq29a})).
 
Let 
\begin{equation}
l_{0}(z) =\frac{4r_{0} K(k)}{\left\vert z-z_{0}\right\vert +r_{0}}, 
\label{eq50} 
\end{equation}
where 
\begin{equation}
k=\frac{2\sqrt{r_{0}\left\vert z-z_{0}\right\vert }}{\left\vert z-z_{0}\right\vert +r_{0}}, 
\label{defk} 
\end{equation}
and ${K}\left( k\right) $ is the complete elliptic integral of the first kind defined by \cite[\S 19.2(ii)]{NIST:DLMF} 
\begin{equation}
{K}\left( k\right) =\int\limits_{0}^{\pi /2}{\dfrac{d\tau }{\sqrt{ 1-k^{2}\sin ^{2}\left( \tau \right) }}}=\int\limits_{0}^{1}{\dfrac{dt}{\sqrt{ \left( 1-t^{2}\right) \left( 1-k^{2}t^{2}\right) }}\ }\left( 0\leq k<1\right) . 
\label{Kelliptic} 
\end{equation}
From \cite{Dunster:2020:SEB} it was shown that
\begin{equation}
l_{0}(z)=\oint_{\left\vert t-z_{0}\right\vert =r_{0}}\left\vert {\dfrac{dt}{t-z}}
\right\vert.
\label{l0}
\end{equation}

We now define certain functions that will also appear in our error bounds. These are new, and did not appear in \cite{Dunster:2020:SEB}. Let $ \mathcal{G}_{m,2s+1}(z) $ ($s=m,m+1,m+2,\cdots $) be defined by the convergent expansion 
\begin{multline}
 \dfrac{1}{\left\{ f(z) \zeta (z) \right\} ^{1/4}} \left[ {\exp \left\{ \sum\limits_{s=1}^{2m}{\dfrac{\mathcal{E}_{s}(z) }{u^{s}}}\right\} }\left( \exp \left\{ \sum \limits_{s=2m+1}^{2m+2r+1}{\dfrac{\mathcal{E}_{s}(z) }{u^{s}}} \right\} {-1}\right) \right. \\ \left. -{\exp \left\{ {\sum\limits_{s=1}^{2m}}(-1) ^{s}{\dfrac{ \mathcal{E}_{s}(z) }{u^{s}}}\right\} }\left( \exp \left\{ \sum\limits_{s=2m+1}^{2m+2r+1}(-1) ^{s}{\dfrac{\mathcal{E}_{s}(z) }{u^{s}}}\right\} {-1}\right) \right] =\sum\limits_{s=m}^{\infty }\dfrac{\mathcal{G}_{m,2s+1}(z)}{u^{2s+1}}.
\label{2.1} 
\end{multline}
Thus from (\ref{1.9}) and (\ref{2.1}) we have 
\begin{equation}
\dfrac{\varepsilon_{2m+1,r}(u,z) }{\left\{ f(z) \zeta (z) \right\} ^{1/4}}=\sum\limits_{s=m}^{\infty }{\dfrac{ \mathcal{G}_{m,2s+1}(z) }{u^{2s+1}}}+\frac{\varepsilon_{2m+2r+2}(u,z) }{\left\{ f(z) \zeta (z) \right\} ^{1/4}}.
\label{2.4} 
\end{equation}

We remark that $\mathcal{G}_{m,2s+1}(z) $ depend on $r$ for $ s\geq p+1$, where $p$ is the largest integer such that $2p+1\leq 2m+r$, i.e. $p=\left\lfloor m+\frac{1}{2}\left( r-1\right) \right\rfloor $. However we suppress this dependence. The first term is independent of $r$ for $r\geq 1$ and is given by 
\begin{equation} 
\mathcal{G}_{m,2m+1}(z) =\dfrac{2\mathcal{E}_{2m+1}(z) }{\left\{ f(z) \zeta (z) \right\} ^{1/4}}, 
\label{2.2} 
\end{equation}
and second term is independent of $r$ for $r\geq 3$ and is given by 
\begin{multline} 
\mathcal{G}_{m,2m+3}(z) =\dfrac{1}{\left\{ f(z) \zeta (z) \right\}^{1/4}} 
\left\{ 2 \mathcal{E}_{2m+3}(z) +2\mathcal{E}_{1}(z) \mathcal{E}_{2m+2}(z) \right.
\\ \left. +2\mathcal{E}_{2}(z) \mathcal{E}_{2m+1}(z) 
+ \mathcal{ E}_{1}^{2}(z) \mathcal{E}_{2m+1}(z) \right\}, 
\label{2.3} 
\end{multline}
and so on. 

Similarly $\mathcal{\tilde{G}}_{m,2s}(z) $ ($ s=m,m+1,m+2,\cdots $) are defined by 
\begin{multline}
 \left\{ \dfrac{\zeta (z) }{f(z) }\right\} ^{1/4} \left[ {\exp \left\{ \sum\limits_{s=1}^{2m+1}{\dfrac{\mathcal{\tilde{E}}_{s}(z) }{u^{s}}}\right\} }\left( \exp \left\{ \sum\limits_{s=2m+2}^{2m+2r+1}{\dfrac{\mathcal{\tilde{E}}_{s}(z) }{u^{s}}}\right\} {-1}\right) \right. \\ \left. +{\exp \left\{ {\sum\limits_{s=1}^{2m+1}}(-1) ^{s}{ \dfrac{\mathcal{\tilde{E}}_{s}(z) }{u^{s}}}\right\} }\left( \exp \left\{ \sum\limits_{s=2m+2}^{2m+2r+1}(-1) ^{s}{\dfrac{ \mathcal{\tilde{E}}_{s}(z) }{u^{s}}}\right\} {-1}\right) \right] =\sum\limits_{s=m+1}^{\infty }{\dfrac{\mathcal{\tilde{G}}_{m,2s}(z) }{u^{2s}}}. 
\label{2.5} 
\end{multline}

Thus from (\ref{1.10}) and (\ref{2.5}) we have 
\begin{equation}
\left\{ \dfrac{\zeta (z) }{f(z) }\right\} ^{1/4} \tilde{\varepsilon}_{2m+2,r}(u,z) =\sum\limits_{s=m+1}^{\infty }{ \dfrac{\mathcal{\tilde{G}}_{m,2s}(z) }{u^{2s}}}+\left\{ \dfrac{ \zeta (z) }{f(z) }\right\} ^{1/4}\tilde{\varepsilon}_{2m+2r+2}(u,z). 
\label{2.8} 
\end{equation}
These coefficients depend on $r$ for $s\geq \tilde{p}+1$, where $\tilde{p}$ is the largest integer such that $2\tilde{p}\leq 2m+r$, i.e. $\tilde{p} =\left\lfloor m+\frac{1}{2}r\right\rfloor $. Again we suppress any $r$ dependence. Here for $r\geq 2$ the first term is independent of $r$ and is given by 
\begin{equation}
\mathcal{\tilde{G}}_{m,2m+2}(z) =2\left\{ \dfrac{\zeta (z) }{f(z) }\right\} ^{1/4}\tilde{\mathcal{E}}_{2m+2}(z),
\label{2.6} 
\end{equation}
and for $r\geq 4$ the second term is independent of $r$ and is given by 
\begin{multline} 
\mathcal{\tilde{G}}_{m,2m+4}(z) =\left\{ \dfrac{\zeta (z) }{f(z) }\right\} ^{1/4}\left\{ 2\tilde{\mathcal{E}}_{2m+4}(z) +2\tilde{\mathcal{E}}_{1}(z) \tilde{ \mathcal{E}}_{2m+3}(z) \right.
\\ \left. +2\tilde{\mathcal{E}}_{2}(z) \tilde{\mathcal{E}}_{2m+2}(z) +\tilde{\mathcal{E}}_{1}^{2}(z) \tilde{\mathcal{E}}_{2m+2}(z) \right\}. 
\label{2.7} 
\end{multline}

Our main error bound theorem uses the following generalization of Cauchy's integral formula for meromorphic functions. In particular, this will allow us evaluate certain contour integrals appearing in the error bounds in a numerically satisfactory way. For a proof see \cite[Thm. 9]{Dunster:2020:ASI}.

\begin{theorem}
\label{thm:cauchy}

Let $C$ be a positively orientated simple loop in the $z$ plane, and $G(z)$ be a function that is analytic in the open region enclosed by the path and continuous on its closure, except for a pole of arbitrary order $p$ at an interior point $z=z_{0}$. Let $\left\{ g_{j}\right\}_{j=-\infty }^{\infty }$ be the Laurent coefficients of $G(z)$ at $z=z_{0}$, so that for some $r_{0}>0$ 
\begin{equation}
G(z) =\sum_{j=-p}^{\infty }g_{j}\left( z-z_{0}\right) ^{j}\
\left( 0<\left\vert z-z_{0}\right\vert <r_{0}\right) ,  \label{2.9}
\end{equation}
and let $G^{\ast}(z)$ denote the regular (or analytic) part of $G(z)$ at $z=z_{0}$, given by 
\begin{equation}
G^{\ast}(z) =\sum_{j=0}^{\infty }g_{j}\left( z-z_{0}\right)
^{j}\ \left( 0\leq \left\vert z-z_{0}\right\vert <r_{0}\right) .
\label{2.10}
\end{equation}
Then for all $z$ lying inside $C$ 
\begin{equation}
\oint_{C}\frac{G(t)}{t-z}dt{=2\pi iG}^{\ast}(z).  
\label{2.11}
\end{equation}

\end{theorem}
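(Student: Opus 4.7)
The plan is to decompose $G$ into its principal part and regular part at $z_{0}$, apply the classical Cauchy integral formula to the regular part, and show directly that each monomial in the principal part contributes nothing to the contour integral.

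First, write $G(t)=P(t)+G^{\ast}(t)$, where $P(t)=\sum_{j=1}^{p}g_{-j}(t-z_{0})^{-j}$ is the principal part of the Laurent expansion (\ref{2.9}). The function $G^{\ast}$, initially given near $z_{0}$ by the convergent power series (\ref{2.10}), coincides with $G-P$ on the punctured neighborhood of $z_{0}$; since $G-P$ has a removable singularity at $z_{0}$ (the principal parts cancel), $G^{\ast}$ extends uniquely to a function that is analytic on the entire open region enclosed by $C$ and continuous on its closure. The standard Cauchy integral formula then yields
\begin{equation*}
\oint_{C}\dfrac{G^{\ast}(t)}{t-z}\,dt=2\pi i\,G^{\ast}(z)
\end{equation*}
for every $z$ inside $C$.

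It remains to prove that for each $k=1,\dots,p$,
\begin{equation*}
I_{k}(z):=\oint_{C}\dfrac{dt}{(t-z_{0})^{k}(t-z)}=0.
\end{equation*}
The key step is the partial fraction identity
\begin{equation*}
\dfrac{1}{(t-z_{0})^{k}(t-z)}=\dfrac{1}{(z-z_{0})^{k}(t-z)}-\sum_{j=1}^{k}\dfrac{1}{(z-z_{0})^{k-j+1}(t-z_{0})^{j}},
\end{equation*}
which is verified by clearing denominators. Using $\oint_{C}(t-\alpha)^{-1}\,dt=2\pi i$ for $\alpha\in\{z_{0},z\}$ interior to $C$, together with $\oint_{C}(t-z_{0})^{-j}\,dt=0$ for $j\geq 2$, only the first term on the right and the $j=1$ summand contribute, producing $2\pi i/(z-z_{0})^{k}$ and $-2\pi i/(z-z_{0})^{k}$, which cancel exactly. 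Hence $I_{k}(z)=0$, and combining with the previous paragraph gives
\begin{equation*}
\oint_{C}\dfrac{G(t)}{t-z}\,dt=\oint_{C}\dfrac{G^{\ast}(t)}{t-z}\,dt+\sum_{j=1}^{p}g_{-j}I_{j}(z)=2\pi i\,G^{\ast}(z),
\end{equation*}
which is (\ref{2.11}). No step presents a serious obstacle; the only point needing a moment's thought is the analytic extension of $G^{\ast}$ from the small disk $|z-z_{0}|<r_{0}$ to the full interior of $C$, which is handled by the removable singularity argument above.
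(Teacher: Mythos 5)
Your proof is correct. Note that the paper itself gives no proof of this theorem; it simply cites \cite[Thm.~9]{Dunster:2020:ASI}, so there is nothing internal to compare against. Your argument is the natural self-contained one: splitting $G=P+G^{\ast}$, extending $G^{\ast}$ across the removable singularity, applying the classical Cauchy formula, and killing the principal-part contributions term by term. The partial-fraction identity you use is easily checked (e.g.\ by induction on $k$, or by clearing denominators as you say), and the cancellation of the two $2\pi i/(z-z_{0})^{k}$ terms is exactly the statement that $\mathrm{Res}_{t=z_{0}}\bigl[(t-z_{0})^{-k}(t-z)^{-1}\bigr]=-(z-z_{0})^{-k}$; your computation is thus a hands-on version of the one-line residue-theorem proof
\begin{equation*}
\oint_{C}\frac{G(t)}{t-z}\,dt=2\pi i\left\{G(z)+\mathrm{Res}_{t=z_{0}}\frac{G(t)}{t-z}\right\}=2\pi i\left\{G(z)-P(z)\right\}=2\pi i\,G^{\ast}(z).
\end{equation*}
The only case your partial-fraction identity does not cover is $z=z_{0}$, which the theorem's statement formally includes; there $I_{k}(z_{0})=\oint_{C}(t-z_{0})^{-k-1}\,dt=0$ directly for $k\geq 1$, so the conclusion still holds. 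That is a one-line addendum, not a gap.
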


Each $\mathcal{G}_{m,n}(z) $ has a pole at $z_{0}$, and in accord with (\ref{2.10}) we define $\mathcal{G}_{m,n}^{\ast }(z) $ as its regular part at that pole. Likewise let $\tilde{\mathcal{G}}_{m,n}^{\ast }(z) $ be the regular part of $\tilde{\mathcal{G}}_{m,n}(z)$ at $z_{0}$. 

We now state our main theorem.

\begin{theorem}
\label{Thm3.2}
Assume \cref{hyp:1} and let $z$ lie in the interior of the circle $\Gamma$. Then for each positive integer $m$ and $r$ the differential equation (\ref{eq1}) has solutions (\ref{wjs}) with 
\begin{multline} 
\mathcal{A}_{2m+2}(u,z) =\dfrac{1}{2\pi i}\oint_{\left\vert t-z_{0}\right\vert =r_{0}}\exp \left\{ \sum\limits_{s=1}^{m}{\dfrac{\tilde{ \mathcal{E}}_{2s}(t) }{u^{2s}}}\right\} \\ \times \cosh \left\{ \sum\limits_{s=0}^{m}{\dfrac{\tilde{\mathcal{E}}_{2s+1}(t) }{ u^{2s+1}}}\right\} \left\{ \frac{\zeta (t) }{f(t) } \right\} ^{1/4}\dfrac{dt}{t-z}+\frac{1}{2}\tilde{\kappa}_{2m+2,r}(u,z),
\label{2.18} 
\end{multline}
where 
\begin{equation}
\left\vert \tilde{\kappa}_{2m+2,r}(u,z) \right\vert \leq \sum\limits_{s=m+1}^{\infty }{\dfrac{\left\vert \mathcal{\tilde{G}}_{m,2s}^{\ast }(z) \right\vert }{\left\vert u\right\vert ^{2s}}}+ \dfrac{\tilde{\Upsilon}\tilde{d}_{2m+2r+2}(u) l_{0}(z) }{2\pi\left\vert u\right\vert ^{2m+2r+2}}, 
\label{2.19} 
\end{equation}
and 
\begin{multline} 
\mathcal{B}_{2m+1}(u,z) =\dfrac{1}{2\pi iu^{1/3}} \oint_{\left\vert t-z_{0}\right\vert =r_{0}}\exp \left\{ \sum\limits_{s=1}^{m}{\dfrac{\mathcal{E}_{2s}(t) }{u^{2s}}} \right\} 
\\ \times \sinh \left\{ \sum\limits_{s=0}^{m-1}{\dfrac{\mathcal{E}_{2s+1}(t) }{u^{2s+1}}}\right\} \dfrac{dt}{\left\{ f(t) \zeta (t) \right\} ^{1/4}(t-z) }+\frac{ \kappa_{2m+1,r}(u,z) }{2u^{1/3}}, 
\label{2.20} 
\end{multline}
where 
\begin{equation}
\left\vert \kappa_{2m+1,r}(u,z) \right\vert \leq \sum\limits_{s=m}^{\infty }{\dfrac{\left\vert \mathcal{G}_{m,2s+1}^{\ast }(z) \right\vert }{\left\vert u\right\vert ^{2s+1}}}+\dfrac{ d_{2m+2r+2}(u) l_{0}(z) }{2\pi \Upsilon \left\vert u\right\vert ^{2m+2r+2}}.
\label{2.21} 
\end{equation}
\end{theorem}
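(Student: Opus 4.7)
The plan is to start from the representation (\ref{1.6}) of $\mathcal{A}_{2m+2}(u,z)$ from \cref{thm:far} and apply Cauchy's integral formula, exploiting the fact that $\mathcal{A}_{2m+2}(u,z)$ is analytic at the turning point (being a coefficient in the uniform Airy expansion (\ref{wjs}) which is valid through $z_{0}$). Thus for $z$ interior to $\Gamma$
\[
\mathcal{A}_{2m+2}(u,z)=\frac{1}{2\pi i}\oint_{\Gamma}\frac{\mathcal{A}_{2m+2}(u,t)}{t-z}\,dt.
\]
Substituting the identity (\ref{1.6}) for $\mathcal{A}_{2m+2}(u,t)$ (divided by $\{f(t)/\zeta(t)\}^{1/4}$) under the integral, the explicit cosh-product term reproduces the contour integral appearing in (\ref{2.18}), leaving the remainder
\[
\tilde{\kappa}_{2m+2,r}(u,z)=\frac{1}{2\pi i}\oint_{\Gamma}\frac{\{\zeta(t)/f(t)\}^{1/4}\,\tilde{\varepsilon}_{2m+2,r}(u,t)}{t-z}\,dt.
\]

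The next step is to decompose this remainder using (\ref{2.8}), writing $\{\zeta/f\}^{1/4}\tilde{\varepsilon}_{2m+2,r}$ as the sum $\sum_{s=m+1}^{\infty}\tilde{\mathcal{G}}_{m,2s}(t)/u^{2s}$ plus $\{\zeta/f\}^{1/4}\tilde{\varepsilon}_{2m+2r+2}$. Since each $\tilde{\mathcal{G}}_{m,2s}(t)$ is meromorphic inside $\Gamma$ with its only singularity at $z_{0}$, \cref{thm:cauchy} gives
\[
\frac{1}{2\pi i}\oint_{\Gamma}\frac{\tilde{\mathcal{G}}_{m,2s}(t)}{t-z}\,dt=\tilde{\mathcal{G}}_{m,2s}^{\ast}(z).
\]
After interchanging summation with integration (justified by uniform convergence of the series in (\ref{2.5}) on the compact loop $\Gamma$) and taking absolute values, this piece contributes $\sum_{s=m+1}^{\infty}|\tilde{\mathcal{G}}_{m,2s}^{\ast}(z)|/|u|^{2s}$, the leading term of the bound (\ref{2.19}).

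For the leftover integral involving $\tilde{\varepsilon}_{2m+2r+2}$, I apply the known bound (\ref{1.5a}), replace the factor $\tilde{d}_{2m+2r+2}(u,t)$ by its maximum $\tilde{d}_{2m+2r+2}(u)$ over $\Gamma$, use the inequality $|\{\zeta(t)/f(t)\}^{1/4}|\leq\tilde{\Upsilon}$ from (\ref{zeds}), and invoke the identity (\ref{l0}) so that the arclength estimate reduces to $\oint_{\Gamma}|dt/(t-z)|=l_{0}(z)$. This yields the second term in (\ref{2.19}). The treatment of $\mathcal{B}_{2m+1}(u,z)$ is entirely parallel: start from (\ref{1.8}), decompose the remainder using (\ref{2.4}) with the coefficients defined by (\ref{2.1}), bound $|\varepsilon_{2m+2r+2}|$ via (\ref{1.5b}), and use $|\{f(t)\zeta(t)\}^{-1/4}|\leq\Upsilon^{-1}$ on $\Gamma$ to produce (\ref{2.20})--(\ref{2.21}).

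I expect the main obstacle to be the legitimate use of the \emph{meromorphic} Cauchy formula of \cref{thm:cauchy} rather than a naive ML estimate. The functions $\tilde{\mathcal{G}}_{m,2s}(t)$ and $\mathcal{G}_{m,2s+1}(t)$ are exactly the "high amplitude and oscillatory" quantities on $\Gamma$ whose suprema caused the earlier bounds of \cite{Dunster:2020:SEB} to overestimate the true error; a direct ML inequality on these integrands would reintroduce precisely that loss. The point of the argument is to isolate and evaluate the principal contribution exactly, as the regular parts $\tilde{\mathcal{G}}_{m,2s}^{\ast}(z)$ and $\mathcal{G}_{m,2s+1}^{\ast}(z)$, and to estimate crudely only the genuine $\mathcal{O}(u^{-2m-2r-2})$ tail; verifying the analyticity hypothesis of \cref{thm:cauchy} for each $\tilde{\mathcal{G}}_{m,2s}$ and $\mathcal{G}_{m,2s+1}$ inside $\Gamma$, together with the interchange of sum and integral, is the main technical content.
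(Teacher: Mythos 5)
Your proposal is correct and follows essentially the same route as the paper: express $\tilde{\kappa}_{2m+2,r}$ and $\kappa_{2m+1,r}$ as Cauchy integrals of the weighted errors $\tilde{\varepsilon}_{2m+2,r}$ and $\varepsilon_{2m+1,r}$, split them via (\ref{2.8}) and (\ref{2.4}), evaluate the $\tilde{\mathcal{G}}_{m,2s}$ and $\mathcal{G}_{m,2s+1}$ integrals exactly as regular parts using \cref{thm:cauchy}, and bound the residual $\tilde{\varepsilon}_{2m+2r+2}$, $\varepsilon_{2m+2r+2}$ contributions by the quantities $\tilde{\Upsilon}$, $\Upsilon$, $l_{0}(z)$ and the $d$-terms from \cite{Dunster:2020:SEB}. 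The only cosmetic difference is that you re-derive the final tail estimate directly from (\ref{1.5a})--(\ref{1.5b}), whereas the paper simply cites the corresponding bound (\ref{2.26}) from the earlier work.
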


\begin{proof}
We have from (\ref{1.1}) and (\ref{2.18}) 
\begin{equation}
\tilde{\kappa}_{2m+2,r}(u,z) =\dfrac{1}{2\pi i}\oint_{\left\vert t-z_{0}\right\vert =r_{0}}\left\{ \frac{\zeta (t) }{f(t) }\right\} ^{1/4}{\dfrac{\tilde{\varepsilon}_{2m+2,r}(u,t) dt}{t-z}}.
\label{2.22} 
\end{equation}
Now from this and (\ref{2.8}) we have 
\begin{multline} 
\tilde{\kappa}_{2m+2,r}(u,z) =\dfrac{1}{2\pi i} \sum\limits_{s=m+1}^{\infty }{\dfrac{1}{u^{2s}}}\oint_{\left\vert t-z_{0}\right\vert =r_{0}}{\dfrac{\mathcal{\tilde{G}}_{m,2s}(t) dt}{t-z}}
\\ +\dfrac{1}{2\pi i}\oint_{\left\vert t-z_{0}\right\vert =r_{0}}\left\{ \dfrac{\zeta (t) }{f(t) }\right\}^{1/4}{\dfrac{\tilde{\varepsilon}_{2m+2r+2}(u,t) dt}{t-z}}.
\label{2.23} 
\end{multline}

Next from \cite{Dunster:2020:SEB} 
\begin{equation}
\tilde{\kappa}_{2m+2}(u,z) =\dfrac{1}{2\pi i}\oint_{\left\vert t-z_{0}\right\vert =r_{0}}\left\{ \frac{\zeta (t) }{f(t) }\right\} ^{1/4}{\dfrac{\tilde{\varepsilon}_{2m+2}(u,t) dt}{t-z}}. 
\label{2.24} 
\end{equation}
Hence from (\ref{2.11}), (\ref{2.23}), and (\ref{2.24}) with $m$ replaced by $m+r$ we have 
\begin{equation}
\tilde{\kappa}_{2m+2,r}(u,z) =\sum\limits_{s=m+1}^{\infty }{ \dfrac{\mathcal{\tilde{G}}_{m,2s}^{\ast }(z) }{u^{2s}}+}\tilde{ \kappa}_{2m+2r+2}(u,z). 
\label{2.25} 
\end{equation}
Now from \cite{Dunster:2020:SEB} 
\begin{equation}
\left\vert \tilde{\kappa}_{2m+2r+2}(u,z) \right\vert \leq \dfrac{ \tilde{\Upsilon}\tilde{d}_{2m+2r+2}(u) l_{0}(z) }{ 2\pi \left\vert u\right\vert ^{2m+2r+2}}, 
\label{2.26} 
\end{equation}
and (\ref{2.19}) then follows from (\ref{2.25}). 

The bound (\ref{2.21}) is proved similarly from the relation
\begin{multline} 
\kappa_{2m+1,r}(u,z) =\dfrac{1}{2\pi i} \sum\limits_{s=m}^{\infty }\dfrac{1}{u^{2s+1}}\oint_{\left\vert t-z_{0}\right\vert =r_{0}}{\dfrac{\mathcal{G}_{m,2s+1}(t) dt}{t-z}}
\\ +\dfrac{1}{2\pi i}\oint_{\left\vert t-z_{0}\right\vert =r_{0}}\frac{\varepsilon_{2m+2r+2}(u,t) dt }{\left\{ f(t) \zeta (t) \right\} ^{1/4}(t-z)}.
\label{2.23a} 
\end{multline}

\end{proof}

\subsection{Error bounds for the series appearing in (\ref{2.19}) and (\ref{2.21})}

In evaluating the bounds from (\ref{2.19}) and (\ref{2.21}) one of course would just compute the first few terms of the convergent series involving the $\mathcal{\tilde{G}}_{m,2s}^{\ast }(z)$ and $\mathcal{G}_{m,2s+1}^{\ast }(z)$ coefficients. For completeness we include here bounds for the remainders of such truncated series, even though in practice we would not usually compute these.

This is an exercise involving Maclaurin series error bounds. In general, let $G(w)$ be analytic in an open set containing the disk $\{w:|w|\leq a\}$. We shall use the well-known result that
\begin{equation}
G(w)=\sum_{s=0}^{n}\frac{G^{(s)}(0)}{s!}w^{s}+R_{n+1}\left( w\right),
\label{G1}
\end{equation}
where for $0\leq |w|<a$
\begin{equation}
R_{n+1}\left( w\right) =\frac{w^{n+1}}{2\pi i}\oint_{\left\vert v\right\vert
=a}\frac{G(v)dv}{v^{n+1}(v-w)},
\label{G2}
\end{equation}
and hence 
\begin{equation}
\left\vert R_{n+1}\left( w\right) \right\vert \leq \sup_{|w|=a}\left\vert
G(w)\right\vert \frac{|w|^{n+1}}{a^{n}(a-|w|)}\ (0\leq |w|<a).
\label{G3}
\end{equation}

Let us apply this to the series (\ref{2.1}), with $w=1/u$ regarded as a small complex variable. Fix $z\in \Gamma$ and let
\begin{multline}
G_{2m+1}(w,z)=\dfrac{1}{\left\{ f(z)\zeta (z)\right\} ^{1/4}}\left[ 2{\exp
\left\{ \sum\limits_{s=1}^{m+r}\mathcal{E}_{2s}(z)w^{2s}\right\} \sinh
\left\{ \sum\limits_{s=0}^{m+r}\mathcal{E}_{2s+1}(z)w^{2s+1}\right\} }
\right.  \\
\left. -2{\exp \left\{ \sum\limits_{s=1}^{m}\mathcal{E}_{2s}(z)w^{2s}\right
\} \sinh \left\{ {\sum\limits_{s=0}^{m-1}}\mathcal{E}_{2s+1}(z)w^{2s+1}
\right\} }\right].  
\label{G4}
\end{multline}
Then $G_{2m+1}(w,z)$ is entire in $w$, so that for $0\leq |w|<\infty $ it possesses the Maclaurin expansion
\begin{equation}
G_{2m+1}(w,z)=\sum\limits_{s=m}^{\infty }\mathcal{G}_{m,2s+1}(z)w^{2s+1}.
\label{G5}
\end{equation}
Note that in relation to (\ref{G1}) only odd powers of $w$ appear, and of these the first $m$ terms are identically zero, but this obviously does not affect the validity of the bound (\ref{G3}).

We now apply (\ref{G1}) - (\ref{G3}), and in these we can choose any positive value of $a$. However, it must not be too small on account of the $a^{n}$ term in the denominator in the bound (\ref{G3}), but also must not be too large on account of the supremum appearing in this bound. 

Given these considerations our choice is given by $a=1/u_{m}$, where
\begin{equation}
u_{m}=\left( E_{2m+2r+1}\right) ^{1/(2m+2r+1)},
\label{G6}
\end{equation}
in which
\begin{equation}
E_{s}=\underset{z\in \Gamma }{\sup }\left\vert \mathcal{E}_{s}(z)\right\vert.
\label{G7}
\end{equation}

Now let us apply (\ref{G1}) - (\ref{G3}). We truncate the series (\ref{G5}) at $s=N$ for arbitrary $N\geq m$, yielding
\begin{equation}
G_{2m+1}(w,z)=\sum\limits_{s=m}^{N}\mathcal{G}
_{m,2s+1}(z)w^{2s+1}+R_{m,2N+3}(w,z),
\label{G8}
\end{equation}
where for $|w|<1/u_{m}$
\begin{equation}
R_{m,2N+3}(w,z)=\frac{w^{2N+3}}{2\pi i}\oint_{\left\vert v\right\vert =w_{m}}
\frac{G_{2m+1}(v,z)dv}{v^{2N+3}(v-w)}.
\label{G9}
\end{equation}

Next for $z\in \Gamma $ we have from (\ref{zeds}), (\ref{G4}) and (\ref{G7})
\begin{equation}
\sup_{|w|=1/u_{m}}\left\vert G_{2m+1}(w,z) \right\vert \leq G_{m},
\label{G10}
\end{equation}
where
\begin{multline}
G_{m}=\dfrac{2}{\Upsilon }{\exp \left\{ \sum\limits_{s=1}^{m+r}\frac{E_{2s}}{
u_{m}^{2s}}\right\} \sinh \left\{ \sum\limits_{s=0}^{m+r}\frac{E_{2s+1}}{
u_{m}^{2s+1}}\right\} } \\
+\dfrac{2}{\Upsilon }{\exp \left\{ \sum\limits_{s=1}^{m}\frac{E_{2s}}{
u_{m}^{2s}}\right\} \sinh }\left\{ \sum_{s=0}^{m-1}\frac{E_{2s+1}}{
u_{m}^{2s+1}}\right\}.
\label{G11}
\end{multline}
Hence for $|w|<1/u_{m}$ we have from (\ref{G9})
\begin{equation}
\left\vert R_{m,2N+3}(w,z)\right\vert \leq \frac{G_{m}\left( u_{m}|w|\right)
^{2N+3}}{\left( 1-u_{m}|w|\right) }. \label{G12}
\end{equation}

Typically for large $s$ we have $E_{s}\sim k_{s}l^{s}s!$ for some
positive constant $l$ and slowly varying $k_{s}$ which is ${\mathcal{O}}(1)$ (c.f. \cref{thm:as}). If so, then by Stirling's formula we find for $1\leq n\leq 2m+2r+1$
\begin{equation}
\frac{E_{n}}{u_{m}^{n}}=\left( \frac{n}{2m+2r+1}\right) ^{n}{\mathcal{O}}
(1)\ (m\rightarrow \infty ).
\label{G13}
\end{equation}
Hence all sums appearing in (\ref{G11}) are ${\mathcal{O}}\left( 1\right) $ and
hence so is $G_{m}$. Thus, for $0\leq |w|\leq \delta /u_{m}$ where $0<\delta
<1$ 
\begin{equation}
R_{m,2N+3}(w,z)=\left( E_{2m+2r+1}\right) ^{(2N+3)/(2m+2r+1)}{\mathcal{O}}
\left( w^{2N+3}\right) \ (m\rightarrow \infty ).
\label{G14}
\end{equation}
Note if $N<m+r$ then
\begin{equation}
R_{m,2N+3}(w,z)=E_{2m+2r+1}{\mathcal{O}}\left( w^{2N+3}\right) \
(m\rightarrow \infty ).
\label{G15}
\end{equation}
This is the reason for our choice (\ref{G6}).

Bringing everything together, from (\ref{2.1}), (\ref{G4}), (\ref{G9}) and \cref{thm:cauchy} we have for integer $N\geq m$

\begin{multline}
\dfrac{1}{2\pi i} \sum\limits_{s=m}^{\infty }\dfrac{1}{u^{2s+1}}\oint_{\left\vert t-z_{0}\right\vert =r_{0}}{\dfrac{\mathcal{G}_{m,2s+1}(t) dt}{t-z}} \\
=\sum\limits_{s=m}^{N}\dfrac{ \mathcal{G}_{m,2s+1}^{\ast }(z) }{ u^{2s+1}}+\dfrac{1}{2\pi i}\oint_{\left\vert t-z_{0}\right\vert =r_{0}}{\dfrac{R_{m,2N+3}(u^{-1},t) dt}{t-z}}.
\label{G15a}
\end{multline}
Hence on referring to (\ref{l0}), (\ref{2.21}), (\ref{2.23a}) and (\ref{G12}) we have our main result that
\begin{multline}
\left\vert \kappa _{2m+1,r}(u,z)\right\vert \leq \sum\limits_{s=m}^{N}{
\dfrac{\left\vert \mathcal{G}_{m,2s+1}^{\ast }(z)\right\vert }{\left\vert
u\right\vert ^{2s+1}}} \\
+\left( \frac{u_{m}}{|u|}\right) ^{2N+3}\frac{G_{m}{l}_{0}{(z)}}{2\pi \left(
1-u_{m}|u|^{-1}\right) }+\dfrac{d_{2m+2r+2}(u)l_{0}(z)}{2\pi \Upsilon
\left\vert u\right\vert ^{2m+2r+2}}.
\label{G16}
\end{multline}

Similarly let
\begin{equation}
\tilde{E}_{s}=\underset{z\in \Gamma }{\sup }\left\vert \mathcal{\tilde{E}}
_{s}(z)\right\vert,  
\label{G18}
\end{equation}
\begin{equation}
\tilde{u}_{m}=\left( \tilde{E}_{2m+2r+1}\right) ^{1/(2m+2r+1)},
\label{G17}
\end{equation}
and
\begin{multline}
\tilde{G}_{m}=2\tilde{\Upsilon}{\exp }\left\{ {\sum\limits_{s=1}^{m+r}}\frac{\tilde{E}_{2s}}{\tilde{u}_{m}^{2s}}\right\} \cosh {\left\{
\sum\limits_{s=0}^{m+r}\frac{\tilde{E}_{2s+1}}{\tilde{u}_{m}^{2s+1}}\right\} 
} \\
+2\tilde{\Upsilon}{\exp \left\{ \sum\limits_{s=1}^{m}\frac{{\tilde{E}_{2s}}}{
\tilde{u}_{m}^{2s}}\right\} \cosh }\left\{ \sum_{s=0}^{m}\frac{\tilde{E}
_{2s+1}}{\tilde{u}_{m}^{2s+1}}\right\} .  \label{G22}
\end{multline}
Then for integer $N\geq m+1$
\begin{multline}
\left\vert \tilde{\kappa}_{2m+2,r}(u,z)\right\vert \leq
\sum\limits_{s=m+1}^{N}{\dfrac{\left\vert \mathcal{\tilde{G}}_{m,2s}^{\ast
}(z)\right\vert }{\left\vert u\right\vert ^{2s}}} \\
+\left( \frac{\tilde{u}_{m}}{|u|}\right) ^{2N+2}\frac{\tilde{G}_{m}{l}_{0}{(z)
}}{2\pi \left( 1-\tilde{u}_{m}|u|^{-1}\right) }+\dfrac{\tilde{\Upsilon}
\tilde{d}_{2m+2r+2}(u)l_{0}(z)}{2\pi \left\vert u\right\vert ^{2m+2r+2}}.
\label{G23}
\end{multline}

\section{Bessel functions of large order} 
\label{sec4} 
Following \cite{Dunster:2020:SEB} in the equation (\ref{eq1}) we take $u=\nu $ and the functions $f(z)$ and $g(z)$ are given by 
\begin{equation}
f(z)=\frac{1-z^{2}}{z^{2}},\ g(z)={-\frac{1}{4z^{2}}},
\label{3.1a} 
\end{equation}
and from (\ref{xi})
\begin{equation}
\xi =\frac{2}{3}\zeta ^{3/2}=\ln \left\{ {\frac{1+\left( {1-z^{2}}\right)
^{1/2}}{z}}\right\} -\left( {1-z^{2}}\right) ^{1/2}.
\label{xiBessel}
\end{equation}

The coefficients (\ref{eq7}) are given by 
\begin{equation}
\hat{E}_{s}(z) =\int_{z}^{\infty }t^{-1}\left( {1-t^{2}} \right) ^{1/2}\hat{F}{(t) dt}\quad \left( {s=1,2,3,\cdots } \right) , 
\label{3.1} 
\end{equation}
where 
\begin{equation} \hat{F}_{1}(z)=\frac{z^{2}(z^{2}+4)}{8(z^{2}-1)^{3}},\,\hat{F}_{2}(z)=\frac{z}{2\left( 1-z^{2}\right) ^{1/2}}\hat{F}_{1}^{\prime }(z), 
\label{3.2} 
\end{equation}
and 
\begin{equation} \hat{F}_{s+1}(z)=\frac{z}{2\left( 1-z^{2}\right) ^{1/2}}\hat{F}_{s}^{\prime }(z)-\frac{1}{2}\sum_{j=1}^{s-1}\hat{F}_{j}(z)\hat{F}_{s-j}(z)\quad \left( {s=2,3,\cdots }\right) . 
\label{recuFs} 
\end{equation}

Let $C_{2s+1}$ are the coefficients in the Stirling asymptotic series 
\begin{equation}
\Gamma(\nu)\sim \left( 2\pi \right) ^{1/2}e^{-\nu}\nu^{{\nu -(1/2)} }\exp \left\{ \sum_{j=0}^{\infty }\frac{C_{2j+1}}{\nu ^{2j+1}}\right\} \ (\nu\rightarrow \infty ). 
\label{3.4} 
\end{equation}
with ${C_{2j}=0}$ ($j=1,2,3,\cdots $).

From \cite{Dunster:2020:SEB} we have the exact expressions 
\begin{multline}
 \mathcal{A}_{2m+2}(\nu,z) =\pi ^{1/2}{e^\nu\nu^{-{\nu +(5/6)}}\Gamma (\nu) }\exp \left\{ -\sum\limits_{j=0}^{m+r-1} {\ \dfrac{C_{2j+1}}{\nu^{2j+1}}}\right\} \\ \times z^{1/2}\left\{ {e^{\pi i/6}\mathrm{Ai}_{-1}^{\prime }\left( {\nu ^{2/3}\zeta }\right) J_\nu(\nu z) -\tfrac{1}{2}i\mathrm{Ai}_{0}^{\prime }\left(\nu ^{2/3}\zeta \right) H_\nu^{(1) }(\nu z) }\right\}, 
\label{3.5} 
\end{multline}
 and 
\begin{multline}
 \mathcal{B}_{2m+1}(\nu,z) =\pi ^{1/2}e^{\nu}{\nu^{-{\nu +(5/6)}}}\Gamma (\nu) \exp \left\{ -\sum\limits_{j=0}^{m+r-1} {\ \dfrac{C_{2j+1}}{\nu^{2j+1}}}\right\} \\ \times z^{1/2}\left\{ \tfrac{1}{2}i{\mathrm{Ai}_{0}\left( {\nu ^{2/3}\zeta } \right) }H_\nu^{(1) }(\nu z) {-e^{\pi i/6} \mathrm{\ Ai}_{-1}\left(\nu ^{2/3}\zeta \right) }J_\nu(\nu z) \right\}.
\label{3.6} 
\end{multline}

On applying \cref{thm:far} we have 
\begin{multline}
 \mathcal{A}_{2m+2}(\nu,z) =\left( {\dfrac{z^{2}\zeta }{1-z^{2} }}\right) ^{1/4} \\ \times \left[ \exp \left\{\sum\limits_{s=1}^{m}{\dfrac{\mathcal{\tilde{E}}_{2s}(z) }{\nu^{2s}}}\right\} \cosh \left\{ \ \sum\limits_{s=0}^{m}{\dfrac{\mathcal{\tilde{E}}_{2s+1}(z) }{{\ \nu }^{2s+1}}}\right\} +\frac{1}{2}\tilde{\varepsilon}_{2m+2,r}(\nu,z) \right],
\label{3.7} 
\end{multline}
 and 
\begin{multline}
 \mathcal{B}_{2m+1}(\nu,z) =\dfrac{1}{\nu^{1/3}}\left\{ {\ \dfrac{z^{2}}{\zeta \left( 1-z^{2}\right) }}\right\} ^{1/4} \\ \times \left[ {\exp \left\{ \sum\limits_{s=1}^{m}{\dfrac{\mathcal{E}_{2s}(z) }{\nu^{2s}}}\right\} \sinh \left\{\ \sum\limits_{s=0}^{m-1}{\dfrac{\mathcal{E}_{2s+1}(z) }{\nu ^{2s+1}}}\right\} }+\frac{1}{2}\varepsilon_{2m+1,r}(\nu,z) \right].
\label{3.8} 
\end{multline}
 In these 
\begin{multline}
 \left\vert \tilde{\varepsilon}_{2m+2,r}(\nu,z) \right\vert \leq {\exp \left\{ \sum\limits_{s=1}^{2m+1}{\dfrac{{\mathrm{Re\ }}\mathcal{ \tilde{E}}_{s}(z) }{\nu ^{s}}}\right\} }\left\vert \exp \left\{ \sum\limits_{s=2m+2}^{2m+2r+1}{\dfrac{\mathcal{\tilde{E}}_{s}(z) }{\nu ^{s}}}\right\} {-1}\right\vert \\ +{\exp \left\{ {\sum\limits_{s=1}^{2m+1}}(-1) ^{s}{\dfrac{{ \mathrm{Re\ }}\mathcal{\tilde{E}}_{s}(z) }{\nu ^{s}}}\right\} } \left\vert \exp \left\{ \sum\limits_{s=2m+2}^{2m+2r+1}(-1) ^{s}{ \dfrac{\mathcal{\tilde{E}}_{s}(z) }{\nu ^{s}}}\right\} {-1} \right\vert +\frac{\tilde{d}_{2m+2r+2}(\nu,z) }{{\nu ^{2m+2r+2}} },
\label{3.9} 
\end{multline}
 and 
\begin{multline}
 \left\vert \varepsilon_{2m+1,r}(\nu,z) \right\vert \leq {\exp \left\{ \sum\limits_{s=1}^{2m}{\dfrac{{\mathrm{Re\ }}\mathcal{E}_{s}(z) }{\nu ^{s}}}\right\} }\left\vert \exp \left\{ {\sum \limits_{s=2m+1}^{2m+2r+1}{\dfrac{\mathcal{E}_{s}(z) }{\nu ^{s}}} }\right\} {-1}\right\vert \\ +{\exp \left\{ {\sum\limits_{s=1}^{2m}}(-1) ^{s}{\dfrac{{ \mathrm{Re\ }}\mathcal{E}_{s}(z) }{\nu ^{s}}}\right\} } \left\vert {\exp \left\{ \sum\limits_{s=2m+1}^{2m+2r+1}(-1) ^{s} {\dfrac{\mathcal{E}_{s}(z) }{\nu ^{s}}}\right\} {-1}} \right\vert +\frac{d_{2m+2r+2}(\nu,z) }{{\nu ^{2m+2r+2}}},
\label{3.10} 
\end{multline}
 where for ${\nu >0}$ and $z\in S_{0,-1}\cup S_{-1,0}$ 
\begin{multline}
 \tilde{d}_{2m+2r+2}(\nu,z) 
 \\=\exp \left\{\ \sum\limits_{s=1}^{2m+2r+1}{\dfrac{{\mathrm{Re\ }}\mathcal{\tilde{E}}_{s}(z) }{{\ \nu }^{s}}}\right\} \tilde{e}_{2m+2r+2,-1}\left( { \nu ,z}\right) \left\{ 1+\dfrac{\tilde{e}_{2m+2r+2,-1}(\nu,z) }{2{\nu^{2m+2r+2}}}\right\} ^{2} \\ +\exp \left\{ \sum\limits_{s=1}^{2m+2r+1}{\left( {-1}\right) ^{s}\dfrac{{ \mathrm{Re\ }}\mathcal{\tilde{E}}_{s}(z) }{\nu^{s}}}\right\} \tilde{e}_{2m+2r+2,0}(\nu,z) \left\{ 1+\dfrac{\tilde{e}_{2m+2r+2,0}(\nu,z) }{2{\nu^{2m+2r+2}}}\right\} ^{2},
\label{3.11} 
\end{multline}
 in which (for $j=0,-1$) 
\begin{multline}
 \tilde{e}_{2m+2r+2,j}(\nu,z)={\nu^{2m+2r+2}\delta }_{2m+2r+2,j}(\nu) \\ +\omega_{2m+2r+2,j}(\nu,z) \exp \left\{ {\nu^{-1}\varpi_{2m+2r+2,j}(\nu,z) +\nu^{-2m-2r-2}\omega_{2m+2r+2,j}(\nu,z) }\right\} \\ +\tilde{\gamma}_{2m+2r+2}\left( {\nu ,\xi }\right) \exp \left\{ {\nu^{-1} \tilde{\beta}_{2m+2r+2}\left( {\nu ,\xi }\right) +\nu^{-2m-2r-2}\tilde{ \gamma}_{2m+2r+2}(\nu,\xi)}\right\}. 
\label{3.12} 
\end{multline}
Here ${\delta }_{2m+2r+2,0}(\nu) =0$, 
\begin{equation}
\delta_{2m+2r+2,\pm 1}(\nu) =\left( \frac{1}{2\pi } \right) ^{1/2}\frac{e^\nu\Gamma (\nu) }{\nu^{\nu -(1/2)}}\exp \left\{ -\sum\limits_{j=0}^{m+r}{\frac{{C}_{2j+1}}{\nu ^{2j+1}}}\right\} -1,
\label{3.13} 
\end{equation}

\begin{multline}
 \omega_{2m+2r+2,0}(\nu,z) =2\int_{0}^{z}{\left\vert \dfrac{{ \hat{F}_{2m+2r+2}(t) }\left( 1-t^{2}\right)^{1/2}}{t}{dt} \right\vert } \\ +\sum\limits_{s=1}^{2m+2r+1}\dfrac{1}{\nu ^{s}}\int_{0}^{z}{\left\vert { \sum\limits_{k=s}^{2m+2r+1}}\dfrac{{{\hat{F}_{k}(t) \hat{F}_{s+2m+2r-k+1}(t) }}\left( 1-t^{2}\right) ^{1/2}}{t}{dt} \right\vert },
\label{3.14} 
\end{multline}
 
\begin{equation}
\varpi_{2m+2r+2,0}(\nu,z) =4\sum\limits_{s=0}^{2m+2r}\frac{1 }{{\nu ^{s}}}\int_{0}^{z}{\left\vert \frac{{\hat{F}_{s+1}(t) }\left( 1-t^{2}\right) ^{1/2}}{t}{dt}\right\vert}, 
\label{3.15} 
\end{equation}
and $\omega_{2m+2r+2,-1}(\nu,z)$ and $\varpi_{2m+2r+2,-1}(\nu,z)$ are the same except the lower limits of integration are $i\infty $ instead of $0$. The paths of integration can be taken as straight lines in both cases, in the latter case vertical lines from $z$ to infinity.

Similarly $d_{2m+2r+2}(\nu,z)$ is given by (\ref{3.11}) and (\ref{3.12}), except $\mathcal{\tilde{E}}_{s}(z)$, $\tilde{\gamma}_{2m+2r+2}(\nu,\xi)$ and $\tilde{\beta}_{2m+2r+2}(\nu,\xi)$ are replaced by $\mathcal{E}_{s}(z)$, $\gamma_{2m+2r+2}(\nu,\xi)$ and $\beta_{2m+2r+2}(\nu,\xi)$, respectively.

Near the turning point we apply \cref{Thm3.2}, and from this we have 
\begin{multline} 
\mathcal{A}_{2m+2}(\nu,z) =\dfrac{1}{2\pi i}\oint_{\left\vert t-z_{0}\right\vert =r_{0}}\exp \left\{ \sum\limits_{s=1}^{m}{\dfrac{\tilde{ \mathcal{E}}_{2s}(t) }{\nu ^{2s}}}\right\} 
\\ \times \cosh \left\{  \sum\limits_{s=0}^{m}{\dfrac{\tilde{\mathcal{E}}_{2s+1}(t) }{\nu ^{2s+1}}}\right\} \left\{ {\dfrac{t^{2}\zeta (t)}{1-t^{2}}}\right\} ^{1/4} \dfrac{dt}{t-z}+\frac{1}{2}\tilde{\kappa}_{2m+2,r}(\nu,z),
\label{3.15a} 
\end{multline}
where 
\begin{equation} 
\left\vert \tilde{\kappa}_{2m+2,r}(\nu,z) \right\vert \leq \sum\limits_{s=m+1}^{\infty }{\dfrac{\left\vert \mathcal{\tilde{G}}_{m,2s}^{\ast }(z) \right\vert }{\nu ^{2s}}}+\dfrac{\tilde{ \Upsilon}\tilde{d}_{2m+2r+2}(\nu) l_{0}(z) }{2\pi \nu ^{2m+2r+2}},
\label{3.15b} 
\end{equation}
and 
\begin{multline} 
\mathcal{B}_{2m+1}(\nu,z) 
=\dfrac{1}{2\pi i\nu ^{1/3}} 
\oint_{\left\vert t-z_{0}\right\vert =r_{0}}
\exp \left\{ 
\sum\limits_{s=1}^{m}\dfrac{  \mathcal{E}_{2s}(t) }{\nu ^{2s}} 
\right\} \\ 
\times \sinh \left\{ 
\sum\limits_{s=0}^{m-1} \dfrac{\mathcal{E}_{2s+1}(t) }{\nu ^{2s+1}}
\right\} 
\left\{ 
\dfrac{t^{2}}{\zeta (t)\left( 1-t^{2}\right) }
\right\}^{1/4}
\dfrac{dt}{t-z}+\frac{\kappa_{2m+1,r}(\nu,z) }{2\nu ^{1/3}},
\label{3.15c}
\end{multline}
where 
\begin{equation}
\left\vert \kappa_{2m+1,r}(\nu,z) \right\vert \leq \sum\limits_{s=m}^{\infty }{\dfrac{\left\vert \mathcal{G}_{m,2s+1}^{\ast }(z) \right\vert }{\nu ^{2s+1}}}+\dfrac{d_{2m+2r+2}(\nu) l_{0}(z) }{2\pi \Upsilon \nu ^{2m+2r+2}}.
\label{3.15d} 
\end{equation}
Here $d_{n}(\nu)$ and $\tilde{d}_{n}(\nu)$ are given by (\ref{d2m+1}) and (\ref{d2m+2}) with $u$ replaced by $\nu$.

On identifying with the standard Bessel functions we firstly have 
\begin{equation}
J_\nu(\nu z) =c_{m,0}(\nu) z^{-1/2}\left\{ \mathrm{Ai}\left( {\nu ^{2/3}\zeta }\right) \mathcal{A}_{2m+2}\left( {\nu ,z} \right) +\mathrm{Ai}^{\prime }\left( {\nu ^{2/3}\zeta }\right) \mathcal{B}_{2m+1}(\nu,z) \right\},
\label{3.16} 
\end{equation}
where 
\begin{multline}
 c_{m,0}(\nu) =\dfrac{2\pi ^{1/2}\nu^{\nu -(5/6)}}{e^\nu\Gamma (\nu) } \\ \times \left[ {\exp \left\{ -{\sum\limits_{j=0}^{m+r}}\dfrac{C_{2j+1}}{\nu ^{2j+1}}\right\} }+\frac{1}{2}\tilde{\varepsilon}_{2m+2,r}(\nu,0) -\frac{1}{2}\varepsilon_{2m+1,r}\left( \nu,0\right) \right] ^{-1}. 
\label{3.17} 
\end{multline}

 Next for the Hankel function we have 
\begin{multline} 
H_\nu^{(1)}(\nu z) =c_{m,-1}(\nu) z^{-1/2}\left\{ \mathrm{Ai}_{-1}\left( {\nu ^{2/3}\zeta }\right) \mathcal{A}_{2m+2}(\nu,z) \right.  \\ \left.  +\mathrm{Ai}_{-1}^{\prime }\left( {\nu ^{2/3}\zeta }\right) \mathcal{B}_{2m+1}(\nu,z) \right\}.
\label{3.18} 
\end{multline}
To find $c_{m,-1}(\nu) $ let $z\rightarrow i\infty $ and use 
\begin{equation}
H_\nu^{(1) }(\nu z) \sim \left( \frac{2}{\pi { \nu z}}\right) ^{1/2}\exp \left\{ i{\nu z-}\frac{1}{2}{\nu \pi i-}\frac{1}{ 4}{\pi i}\right\},
\label{3.19} 
\end{equation}
along with 
\begin{equation}
\xi =iz-\tfrac{1}{2}\pi i+\mathcal{O}\left( z^{-1}\right),
\label{3.20} 
\end{equation}

\begin{equation}
\mathrm{Ai}_{-1}\left( {\nu ^{2/3}\zeta }\right) \sim \frac{{e}^{-\pi i/6}e^{i\nu z-\frac{1}{2}\nu\pi i}}{2\pi ^{1/2}\nu^{1/6}\zeta ^{1/4}} \ \left( \nu\xi \rightarrow +\infty \right), 
\label{3.21} 
\end{equation}
and 
\begin{equation}
\mathrm{Ai}_{-1}^{\prime }\left( {\nu ^{2/3}\zeta }\right) \sim -\frac{e ^{-\pi i/6}\nu^{1/6}{\zeta }^{1/4}e^{i\nu z-\frac{1}{2}\nu\pi i}}{ 2\pi ^{1/2}}\ \left( \nu\xi \rightarrow +\infty \right). 
\label{3.22} 
\end{equation}
As a result we have as $z\rightarrow i\infty $ 
\begin{equation}
\mathcal{A}_{2m+2}(\nu,z) \sim \left( {-}\zeta \right) ^{1/4} \left\{ 1+\frac{1}{2}\tilde{\varepsilon}_{2m+2,r}\left( \nu,i\infty \right) \right\},
\label{3.23} 
\end{equation}
and 
\begin{equation}
\mathcal{B}_{2m+1}(\nu,z) \sim \dfrac{1}{2\nu^{1/3}} 
\left( -\ \dfrac{1}{\zeta }\right) ^{1/4}\varepsilon_{2m+1,r}\left( { \nu },i\infty \right).
\label{3.24} 
\end{equation}
Hence from (\ref{3.18}) - (\ref{3.24}) we find that 
\begin{equation}
c_{m,-1}(\nu) =\frac{2^{3/2} e^{-\pi i/3}}{\nu^{1/3}} \left[ 1+\frac{1}{2}\tilde{\varepsilon}_{2m+2,r}\left( \nu,i\infty \right) -\dfrac{1}{2}\varepsilon_{2m+1,r}\left( \nu,i\infty \right) \right] ^{-1}. 
\label{3.25} 
\end{equation}

\subsection{Computations away from the turning point}
\label{sec5}
In Tables \ref{table1}, \ref{table2} the error bounds given in (\ref{3.9}) and (\ref{3.10})
are compared with the true numerical errors obtained using (\ref{3.7}) and (\ref{3.8}) 
to approximate (\ref{3.5}) and (\ref{3.6}), respectively. In the comparisons, the values 
of $\nu$, $m$ and $r$ are fixed ($\nu=100$, $m=1$, $r=4$). 
We use Maple with
a large number of digits to evaluate (\ref{3.5}) and (\ref{3.6}).
A Gauss-Legendre quadrature with 30 nodes has been used to evaluate the integrals (\ref{3.14}) and (\ref{3.15})
appearing in the error bounds; the same quadrature has been applied (over a truncated interval) for the integrals for   
$\omega _{2m+2,-1}\left( {\nu ,z}\right) $ and $\varpi _{2m+2,-1}\left( {\nu ,z}\right) $.

\begin{table}
\label{table1}
$$
\begin{array}{lll}
\hline
z     & \mbox{True error} & \mbox{Error bound}   \\
  \hline
0.01 & 0.3494419980168294...  \times 10^{-11} & 0.3494419980326348... \times 10^{-11} \\
\hline
0.05 & 0.1308990430988040... \times 10^{-10} &  0.1308990431015310... \times 10^{-10}\\
\hline
0.15 &   0.5469585035500713... \times 10^{-10} &  0.5469585035736077... \times 10^{-10} \\
\hline
0.2 &     0.1089259283092040... \times 10^{-9} &   0.1089259283203728... \times 10^{-9} \\
\hline
0.25 &   0.2157034971499353... \times 10^{-9} &   0.2157034972175942... \times 10^{-9} \\
\hline
0.3 &     0.4236344565383917... \times 10^{-9} &   0.4236344570004239... \times 10^{-9} \\
\hline
0.35 &   0.8307697485641604... \times 10^{-9} &   0.8307697519296490... \times 10^{-9} \\
\hline
0.4 &     0.1643886586474064... \times 10^{-8} &   0.1643886612493694... \times 10^{-8} \\
 \hline
\end{array}
$$
\caption{Comparison of the bound (\ref{3.9}) with the true numerical error obtained using (\ref{3.7}) to approximate 
(\ref{3.5})  for 
different values of $z$.  In the calculations,
the values of $\nu$, $m$ and $r$  are fixed  ($\nu=100$, $m=1$, $r=4$). }
\end{table}

\begin{table}
\label{table2}
$$
\begin{array}{lll}
\hline
z     & \mbox{True error} & \mbox{Error bound}   \\
\hline
0.01 & 0.6400062383792656...  \times 10^{-8} & 0.6400062383792815...     \times 10^{-8}\\
\hline
0.05 & 0.7935186942078208...  \times 10^{-8} & 0.7935186942078481...     \times 10^{-8}\\
\hline
0.1      & 0.9091671462746869... \times 10^{-8} & 0.9091671462747565...   \times 10^{-8}\\
\hline
0.15    & 0.9835358907013243... \times 10^{-8} & 0.9835358907015595...    \times 10^{-8}\\
\hline
0.2      & 0.1040696060554677... \times 10^{-7} & 0.1040696060555794...    \times 10^{-7}\\
\hline
0.25    & 0.1097733448582064... \times 10^{-7} & 0.1097733448588827...   \times 10^{-7}\\
\hline 
0.3,      &0.1168879400116339... \times 10^{-7} & 0.1168879400162518...  \times 10^{-7}\\
\hline
0.35,    &0.1268224650280313... \times 10^{-7} & 0.1268224650616587...   \times 10^{-7}\\
\hline
0.4,      &0.1412373422298345... \times 10^{-7} & 0.1412373424897128...    \times 10^{-7}\\
 \hline
\end{array}
$$
\caption{Comparison of the bound (\ref{3.10}) with the true numerical error obtained using (\ref{3.8}) to approximate (\ref{3.6})
 for  different values of $z$.  In the calculations,
the values of $\nu$, $m$ and $r$  are fixed  ($\nu=100$, $m=1$, $r=4$). }
\end{table}

As an illustration of the high accuracy of the bounds, in Figure \ref{fig:fign1} we show the relative errors in the approximation of the true errors by the error bounds. The relative error ($e_r$) is given by 

\begin{equation}
\label{reler}
e_r=\left|1-\displaystyle\frac{\mbox{True error}}{\mbox{Error bound}}\right|\,.
\end{equation}

\begin{figure}[htbp]
  \centering
  \includegraphics[width=13cm]{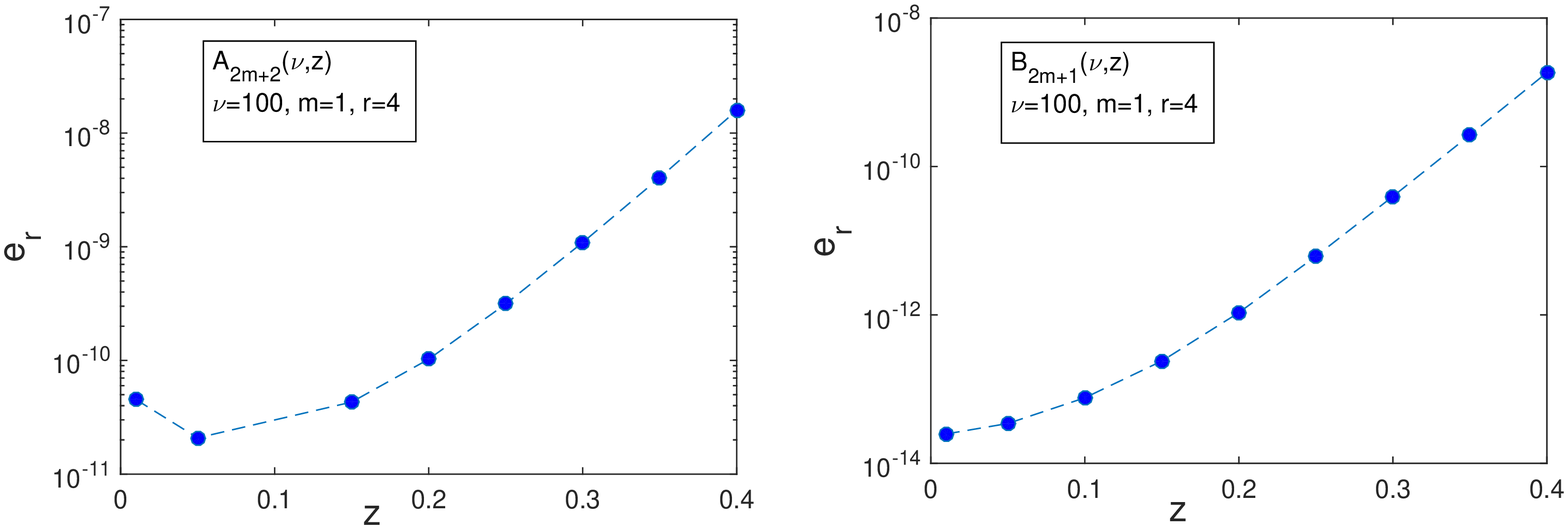}
  \caption{Relative errors in the comparison of the error bounds given in (\ref{3.9}) and (\ref{3.10}) with the true numerical errors obtained 
using (\ref{3.7}) and (\ref{3.8}), respectively, 
for fixed values of $\nu$, $m$ and $r$. }
  \label{fig:fign1}
\end{figure}

A second test of the error bounds (\ref{3.9}) and (\ref{3.10}), is shown in Tables \ref{table3} and \ref{table4}, respectively. 
As before, the bounds are compared with the true numerical errors obtained using (\ref{3.7}) and (\ref{3.8}) 
to approximate (\ref{3.5}) and (\ref{3.6}).
In the calculations,
the values of $z$, $m$ and $r$  are fixed  ($z=0.2$, $m=1$, $r=4$) and few values of $\nu$ have been considered 
for comparison. 

\begin{table}
\label{table3}
$$
\begin{array}{lllll}
\hline
\nu      & \mbox{True error} & \mbox{Error bound} & e_r   \\
  \hline
10   &    0.108936723... \times 10^{-5}     &   0.129960479... \times 10^{-5}     &  0.16     \\    
\hline
20   &    0.680802433... \times 10^{-7}     &    0.680830461  ... \times 10^{-7}     &  0.41 \times 10^{-4}     \\
 \hline
30   &    0.134477719... \times 10^{-7}       &    0.134477933... \times 10^{-7}     &  0.16 \times 10^{-5}     \\    
\hline
40   &    0.425493976... \times 10^{-8}                 & 0.425494044     ... \times 10^{-8}     &  0.15 \times 10^{-5}     \\
 \hline
50   &    0.174281968  ... \times 10^{-8}     &           0.174281973 ... \times 10^{-8}     &  0.26 \times 10^{-7}     \\
 \hline
\end{array}
$$
\caption{Comparison of the bound (\ref{3.9}) with the true numerical error obtained using (\ref{3.7}) to approximate 
(\ref{3.5})  for $z=0.2$,
$m=1$ and $r=4$ and different values of $\nu$. The relative errors given in Eq.(\ref{reler}) are shown in the last column. }
\end{table}

\begin{table}
\label{table4}
$$
\begin{array}{lllll}
\hline
\nu      & \mbox{True error} & \mbox{Error bound} & e_r   \\
  \hline
10   &      0.103831850 ... \times 10^{-4}     &     0.103949514 ... \times 10^{-4}     &  0.10 \times 10^{-2}     \\    
\hline
20   &      0.130014633... \times 10^{-5}                 &  0.130014912 ... \times 10^{-5}     &  0.21 \times 10^{-5}     \\
 \hline
30   &      0.385352565... \times 10^{-6}     &          0.385352587... \times 10^{-6}     &  0.55 \times 10^{-7}     \\    
\hline
40   &      0.16258894713... \times 10^{-6}     &  0.16258894780 ... \times 10^{-6}     &  0.40 \times 10^{-8}     \\
 \hline
50   &      0.83249887531  ... \times 10^{-7}     &  0.83249887577 ... \times 10^{-7}     &  0.55 \times 10^{-9}     \\
 \hline
\end{array}
$$
\caption{Comparison of the bound (\ref{3.10}) with the true numerical error obtained using (\ref{3.8}) to approximate 
(\ref{3.6})  for $z=0.2$,
$m=1$ and $r=4$. The relative errors given in Eq.(\ref{reler}) are shown in the last column.   }
\end{table}

\subsection{Computations close to the turning point}

The hardest part in computing $d_{n}(\nu)$ and $\tilde{d}_{n}(\nu)$ (given by (\ref{d2m+1}) and (\ref{d2m+2}) with $u$ replaced by $\nu$) are the terms $\omega _{n}(\nu )$ and $\varpi _{n}(\nu) $, which are given by (\ref{eq68}) and (\ref{eq69}) respectively, with $u$ replaced by $\nu$. We do not have to compute these to maximum accuracy, rather simple bounds will suffice.

With this in mind we start by using the Cauchy-Schwarz inequality in (\ref{eq68}), to obtain 
\begin{multline}
\int_{\gamma _{j,l}}\left\vert
\hat{F}_{k}(t) \hat{F}_{s+n-k-1}(t)
f^{1/2}(t)dt\right\vert \\ \leq \left\{ \int_{\gamma _{j,l}}{\left\vert 
\left\{\hat{F}_{k}(t)\right\}^{2}
f^{1/2}(t)dt\right\vert }\right\} ^{1/2}\left\{ 
\int_{\gamma_{j,l}}{\left\vert
\left\{\hat{F}_{s+n-k-1}(t)\right\}^{2}
f^{1/2}(t)dt
\right\vert }\right\} ^{1/2}.
\label{CSI}
\end{multline}

We require the maxima of these and the other integrals in (\ref{eq68}) and (\ref{eq69}) over all six paths $\gamma _{j,l}$, but we can simplify as follows. Firstly by Schwarz's symmetry principle we only need to consider $\Im t\geq 0$. Next all points on the upper part of $\Gamma $ ($\Gamma ^{+}$ say) can be accessed by a progressive path which is part, or the whole, of a path that consists of the union of

(i) a line from $t=0$ to $t=r_{0}$, or from $t=1+ir_{0}$ to $t=1+i\infty $, and

(ii) a quarter circle $t=1+r_{0}e^{i\phi }$ from $\phi =0$ to $\phi =\frac{1}{2}\pi $, or from $\phi =\frac{1}{2}\pi $ to $\phi =\pi $.

Therefore for each $k=1,2,3,\cdots $ and $m=1,2$ 
\begin{equation}
\int_{\gamma _{j,l}}\left\vert \left\{\hat{F}_{k}(t)\right\}^{m}f^{1/2}(t)dt
\right\vert \leq F_{m,k},
\label{n1}
\end{equation}
where (using the obvious parametrization for each sub-path)
\begin{multline}
F_{m,k}=\max\left\{\int_{0}^{1-r_{0}}\left\vert \left\{\hat{F}_{k}(t)\right\}^{m}f^{1/2}(t)\right\vert dt,\int_{r_{0}}^{\infty }\left\vert 
\left\{\hat{F}_{k}(1+is)\right\}^{m} f^{1/2}\left( 1+is\right) \right\vert
ds\right\}  \\
+r_{0}\max 
\left\{ \int_{0}^{\pi /2}\left\vert 
\left\{\hat{F}_{k}\left( 1+r_{0}e^{i\phi }\right)\right\}^{m}
{f^{1/2}\left( 1+r_{0}e^{i\phi }\right) }
\right\vert d\phi, \right. 
\\ \left. 
\int_{\pi /2}^{\pi }
\left\vert 
\left\{\hat{F}_{k}\left( 1+r_{0}e^{i\phi }\right)\right\}^{m} 
f^{1/2}\left( 1+r_{0}e^{i\phi }\right) 
\right\vert d\phi 
\right\}.  
\label{n2}
\end{multline}

Thus from (\ref{eq68}), (\ref{eq69}) and (\ref{CSI})
\begin{equation}
\omega _{n}(\nu )\leq \boldsymbol{\omega}_{n}(\nu ):=2F_{1,n}
+\sum\limits_{s=1}^{n-1}\dfrac{1}{\nu ^{s}}{\ \sum\limits_{k=s}^{n-1}}
\left\{ F_{2,k}\right\} ^{1/2}\left\{ F_{2,s+n-k-1}\right\}^{1/2},
\label{n3}
\end{equation}
and
\begin{equation}
\varpi _{n}(\nu )\leq \boldsymbol{\varpi}_{n}(\nu ):=4\sum\limits_{s=0}^{n-2}
\frac{F_{1,s+1}}{\nu ^{s}}.
\label{n4}
\end{equation}

Next we make the interval of integration finite for the second integral of (\ref{n2}) by the following simple change of variable
\begin{multline}
\int_{r_{0}}^{\infty }\left\vert 
\left\{\hat{F}_{k}(1+is)\right\}^{m}
f^{1/2}\left( 1+is\right) \right\vert ds \\ =\int_{0}^{1/r_{0}}{\left\vert 
\left\{\hat{F}_{k}\left(1+it^{-1}\right)\right\}^{m}
f^{1/2}\left( 1+it^{-1}\right)
\right\vert }t^{-2} dt.
\label{n5}
\end{multline}
Here integrand of the second integral is $\mathcal{O}\left(t^{m k+m-2}\right)$ as $t\rightarrow 0$, and hence is straightforward to compute numerically.

We take the optimal choice $r_{0}=1$. Then the first integral in (\ref{n2}) vanishes, and (for example) for $k=10$ and $m=1$ we find for the other three
\begin{equation}
\int_{0}^{1}{\left\vert {\hat{F}}_{10}\left( {1+it^{-1}}\right) {
f^{1/2}\left( 1+it^{-1}\right) }\right\vert }t^{-2}dt=445.18\cdots,
\label{n6}
\end{equation}
\begin{equation}
{\int_{0}^{\pi /2}{\left\vert {{\hat{F}_{10}}}\left( 1+e^{i\phi }\right) {
f^{1/2}\left( 1+e^{i\phi }\right) }\right\vert d\phi =}}3.10\cdots {\times 10
}^{4},
\label{n7}
\end{equation}
\begin{equation}
\int_{\pi /2}^{\pi }\left\vert \hat{F}_{10}\left( 1+e^{i\phi
}\right) {f^{1/2}\left( 1+e^{i\phi }\right) }\right\vert d\phi =1.79\cdots 
\times 10^{3},
\label{n8}
\end{equation}
and hence from (\ref{n2}) and (\ref{n5}) $F_{1,10}=3.15\cdots
\times 10^{4}$.

Similarly, for $m=2$ we obtain
\begin{equation}
\int_{0}^{1}\left\vert 
\left\{\hat{F}_{10}\left( 1+it^{-1}\right)\right\}^{2}
{f^{1/2}\left( 1+it^{-1}\right) }\right\vert t^{-2}dt=1.14\cdots \times 10^{6}  
\label{n11}
\end{equation}
\begin{equation}
\int_{0}^{\pi /2}\left\vert 
\left\{\hat{F}_{10}\left( 1+e^{i\phi }\right)\right\}^{2}
f^{1/2}\left( 1+e^{i\phi }\right) \right\vert d\phi =8.06\cdots \times 10^{8} 
\label{n9}
\end{equation}
\begin{equation}
\int_{\pi /2}^{\pi }\left\vert
\left\{\hat{F}_{10}\left( 1+e^{i\phi }\right)\right\}^{2}
f^{1/2}\left( 1+e^{i\phi }\right) \right\vert d\phi =5.17\cdots 
\times 10^{6}  
\label{n10}
\end{equation}
and hence from (\ref{n2}) and (\ref{n5}) $\left\{ F_{2,10}\right\}^{1/2}=2.84\cdots \times 10^{4}$.

Note in computing some of these integrals by trapezoidal approximation in Maple it is quicker to include in one or both of the limits a decimal number ($0.5\pi$ instead of $\pi/2$, etc.). This prevents Maple attempting to find exact values at the nodes (which can result in a very slow computation).

For other terms needing to be computed in (\ref{d2m+1}) and the bound (\ref{3.15d}) we obtain (again for $r_{0}=1$) 
\begin{equation}
\Upsilon =\underset{z\in \Gamma ^{+}} \inf \left\vert \zeta f(z)\right\vert ^{1/4}=0.935\cdots, 
\label{n12}
\end{equation}
and
\begin{equation}
\rho =\underset{z\in \Gamma ^{+}}{\inf }\left\vert \xi \right\vert
=0.685\cdots,
\label{n14}
\end{equation}
where we used $\Gamma ^{+}$ in place of $\Gamma $, again by virtue of Schwarz's symmetry principle.

To simplify further we replace $M_{s}$, $N_{s}$ in (\ref{eq96}) by a common upper bound $E_{s}$ given by (\ref{G7}) (but again these suprema need only be taken over $\Gamma^{+}$). These, along with $F_{m,k}$, are one-time computations and can be stored.

We then have from (\ref{d2m+1}) 
\begin{equation}
d_{n}(\nu )\leq \boldsymbol{d}_{n}(\nu ):=2\exp \left\{ \sum\limits_{s=1}^{n-1}
\ \dfrac{E_{s}}{\nu ^{s}}\right\} \boldsymbol{e}_{n}(\nu )\left\{ 1+
\dfrac{\boldsymbol{e}_{n}(\nu )}{2\nu ^{n}}\right\} ^{2},
\label{n17}
\end{equation}
where 
\begin{multline}
\boldsymbol{e}_{n}(\nu) =\nu ^{n} \delta_{n}(\nu )+\boldsymbol{\omega}_{n}(\nu )\exp \left\{ \nu ^{-1}\boldsymbol{\varpi}_{n}(\nu )+\nu ^{-n}\boldsymbol{
\omega}_{n}(\nu )\right\}  \\
+\gamma _{n}(\nu ,\rho )\exp \left\{ \nu ^{-1}\beta _{n}(\nu ,\rho)+\nu ^{-n}\gamma _{n}(\nu ,\rho )\right\}.
\label{n18}
\end{multline}

Note that in the bound (\ref{3.15d}) $n=2m+2r+2$, and in (\ref{n18}) we then use
\begin{equation}
{\delta }_{2m+2r+2}(\nu )=\left( \frac{{1}}{2\pi }\right) ^{{1/2}}\frac{e^{
{\nu }}\Gamma (\nu )}{{\nu }^{{\nu -(1/2)}}}\exp \left\{ -{
\sum\limits_{j=0}^{m+r}{\frac{{C}_{2j+1}}{{\nu }^{2j+1}}}}\right\} -1,
\label{n19}
\end{equation}
since ${\delta }_{2m+2r+2,-1}(\nu )=$ ${\delta }_{2m+2r+2,1}(\nu )$. To
compute this number in a stable manner a few terms of an asymptotic expansion are used (see \cref{remark2}).

The term $\tilde{d}_{2m+2r+2}(\nu)$ is similarly bounded, and in this and (\ref{3.15b}) we use  $\boldsymbol{\omega}_{n}(\nu )$ and $\boldsymbol{\varpi}_{n}(\nu )$ as above, $\tilde{M}_{s}$, $\tilde{N}_{s}$ $\leq \tilde{E}_{s}$ where $\tilde{E}_{s}$ is given by (\ref{G17}) (with $\Gamma$ replaced by $\Gamma ^{+}$), and
\begin{equation}
\tilde{\Upsilon}=\underset{z\in \Gamma ^{+}} \sup \left\vert \zeta/f(z)\right\vert ^{1/4}=1.079\cdots.
\label{n21}
\end{equation}

\begin{table} [H]
\label{table5}
$$
\begin{array}{lllll}
\hline
\alpha, \,\nu      & \mbox{True error} & \mbox{Error bound} & e_r   \\
  \hline
0,\,\nu=50   &       0.23466561...       \times 10^{-9}  &  0.23479628...  \times 10^{-9}     &   0.55   \times 10^{-3}     \\
0,\,\nu=100     &   0.14668477...     \times 10^{-10}   &  0.14669749... \times 10^{-10}     &  0.86  \times 10^{-4}         \\           
\hline
\pi/6,\,\nu=50   & 0.23620960...  \times 10^{-9}        &  0.23634124...  \times 10^{-9}     &     0.55 \times 10^{-3}     \\
\pi/6,\,\nu=100&  0.14764999...  \times  10^{-10}      &  0.14766286...  \times 10^{-10}      &     0.87 \times 10^{-4}           \\           
\hline
\pi/3,\,\nu=50   & 0.24012769...   \times 10^{-9}     &    0.24026186... \times 10^{-9}     & 0.56  \times 10^{-3}     \\
\pi/3,\,\nu=100 & 0.15009938...  \times 10^{-10}       &  0.15011265... \times 10^{-10}   &   0.88  \times 10^{-4}         \\           
\hline
\pi/2,\,\nu=50   & 0.24448477... \times 10^{-9}     &       0.24462190... \times 10^{-9}     &  0.56 \times 10^{-3}     \\
\pi/2,\,\nu=100 & 0.15282325...  \times 10^{-10}            &       0.15283698...\times 10^{-10}    &   0.89  \times 10^{-4}          \\           
\hline
2\pi/3,\,\nu=50 &  0.24709299...  \times 10^{-9}     &     0.24723207...   \times 10^{-9}     & 0.56 \times 10^{-3}     \\
2\pi/3,\,\nu=100& 0.15445384...   \times 10^{-10}       & 0.15446788...  \times 10^{-10}     &  0.9 \times 10^{-4}             \\         
\hline
5\pi/6,\,\nu=50  & 0.24736359...   \times 10^{-9}     &    0.24750308...    \times 10^{-9}     &  0.56 \times 10^{-3}     \\
5\pi/6,\,\nu=100& 0.15462306... \times   10^{-10}     &   0.15463717...  \times 10^{-10}     & 0.91  \times 10^{-4}             \\     
\hline
\pi,\,\nu=50     &  0.24700694... \times 10^{-9}     &       0.24714629...   \times 10^{-9}     & 0.56 \times 10^{-3}     \\
\pi,\,\nu=100   &  0.15440013... \times  10^{-10}        &  0.15441421... \times 10^{-10}     &   0.91 \times 10^{-4}             \\ 

\hline    
\end{array}
$$
\caption{Comparison of the bound given in (\ref{3.15b}) with the true numerical error obtained using (\ref{3.15a})  
to approximate (\ref{3.5}). The relative errors given in Eq.(\ref{reler}) are shown in the last column.  }
\end{table}

\begin{table} [H]
\label{table6}
$$
\begin{array}{lllll}
\hline
\alpha, \,\nu      & \mbox{True error} & \mbox{Error bound} & e_r \\
  \hline
0,\,\nu=50   &      0.15449776... \times 10^{-7}    &  0.15455270... \times 10^{-7}     & 0.35   \times 10^{-3}     \\
0,\,\nu=100     &  0.19314772...\times 10^{-8}      &  0.19316474... \times 10^{-8}     &   0.88   \times 10^{-4}         \\           
\hline
\pi/6,\,\nu=50   & 0.15663409... \times 10^{-7}     &  0.15669006... \times 10^{-7}     & 0.36   \times 10^{-3}     \\
\pi/6,\,\nu=100 & 0.19581861...\times 10^{-8}       & 0.19583595... \times 10^{-8}      &   0.89    \times 10^{-4}           \\           
\hline
\pi/3,\,\nu=50   & 0.16258667... \times 10^{-7}     &  0.16264555... \times 10^{-7}     & 0.36  \times 10^{-3}     \\
\pi/3,\,\nu=100 & 0.20326069...\times 10^{-8}       & 0.20327894... \times 10^{-8}   & 0.89    \times 10^{-4}         \\           
\hline
\pi/2,\,\nu=50  &  0.17094997... \times 10^{-7}     &  0.17101298... \times 10^{-7}     & 0.37 \times 10^{-3}     \\
\pi/2,\,\nu=100 & 0.21371675... \times 10^{-8}      & 0.21373629... \times 10^{-8}    &    0.91  \times 10^{-4}          \\           
\hline
2\pi/3,\,\nu=50  & 0.17947572... \times 10^{-7}     & 0.17954297... \times 10^{-7}     & 0.37  \times 10^{-3}     \\
2\pi/3,\,\nu=100& 0.22437593...\times 10^{-8}      & 0.22439679... \times 10^{-8}     &   0.93 \times 10^{-4}             \\           
\hline
5\pi/6,\,\nu=50 &  0.18571359... \times 10^{-7}     & 0.18578395... \times 10^{-7}     &   0.38 \times 10^{-3}     \\
5\pi/6,\,\nu=100& 0.23217472...\times 10^{-8}     &  0.23219656... \times 10^{-8}     &  0.94  \times 10^{-4}             \\           
\hline
\pi,\,\nu=50   &    0.18797216... \times 10^{-7}     & 0.18804363... \times 10^{-7}     &   0.38 \times 10^{-3}     \\
\pi,\,\nu=100     & 0.23499846...\times 10^{-8}     &  0.23502064... \times 10^{-8}     &   0.94   \times 10^{-4}             \\ 
\hline    
\end{array}
$$
\caption{ Comparison of the bound given in (\ref{3.15d}) with the true numerical error obtained using (\ref{3.15c})  
to approximate (\ref{3.6}). The relative errors given in Eq.(\ref{reler}) are shown in the last column.  }
\end{table}

In Table \ref{table5} we show the accuracy of the error bounds given in (\ref{3.15b}) 
for $m=1,$, $r=4$ and two values of $\nu$ ($\nu=50,\,100$). The
values of the argument $z$ considered are $z=1+0.1e^{i\alpha}$, for different values of $\alpha$.

In Table \ref{table6}, results for the
  accuracy of the error bounds given in (\ref{3.15d}) are shown. For computing the integrals in (\ref{3.15a}), 
(\ref{3.15c}) and the coefficients ${\cal{G}}_{m,n}^*(z)$ and  ${\cal{\tilde{G}}}_{m,n}^*(z)$,
we use a circular path of integration enclosing the turning point with parametrization $t(\theta)=z_c+Re^{i\theta}$;
we use $\theta \in (0,\,2\pi)$ with $z_c=1.5$ and $R=1.3$.  
The resulting integrals are well approximated using the trapezoidal rule with $500$ points over the contour. See \cite{Bornemann:2011:AAS} for details of the efficacy of this numerical method for evaluating Cauchy integrals.

\begin{table} [h]
\label{table7}
$$
\begin{array}{lllll}
\hline
R_z      & \mbox{True Error} & \mbox{Bound} & e_r  \\
  \hline
10^{-3},\,(\ref{3.15b})  &  0.15170987... \times 10^{-10}    & 0.15172344... \times 10^{-10}     & 0.89   \times 10^{-4}     \\
10^{-3},\,(\ref{3.15d})   & 0.21359742...\times 10^{-8}      & 0.21361694... \times 10^{-8}     & 0.91   \times 10^{-4}         \\           
\hline
10^{-2},\,(\ref{3.15b})   & 0.15206221... \times 10^{-10}   & 0.15207584... \times 10^{-10}     &   0.89 \times 10^{-4}     \\
10^{-2},\,(\ref{3.15d}) &   0.21549252...\times 10^{-8}      & 0.21551227... \times 10^{-8}      &   0.92   \times 10^{-4}           \\           
\hline
0.1,\,(\ref{3.15b})  &         0.15440013... \times 10^{-10}   & 0.15441421... \times 10^{-10}     & 0.91 \times 10^{-4}     \\
0.1,\,(\ref{3.15d}) &          0.23499846...\times 10^{-8}      & 0.23502064... \times 10^{-8}   &   0.94 \times 10^{-4}         \\           
\hline
0.2,\,(\ref{3.15b})  &         0.15357545... \times 10^{-10}    & 0.15358950... \times 10^{-10}     &  0.91 \times 10^{-4}     \\
0.2,\,(\ref{3.15d}) &          0.25747685... \times 10^{-8}      & 0.25750187... \times 10^{-8}    &   0.97   \times 10^{-4}          \\           
\hline
0.3,\,(\ref{3.15b})  &         0.14740882... \times 10^{-10}     &0.14742194... \times 10^{-10}     & 0.89 \times 10^{-4}     \\
0.3,\,(\ref{3.15d})&           0.27983939... \times 10^{-8}      & 0.27986717... \times 10^{-8}     &  0.99 \times 10^{-4}             \\           
\hline
0.4,\,(\ref{3.15b}) &          0.13356389... \times 10^{-10}     &0.13357483... \times 10^{-10}     &  0.82  \times 10^{-4}     \\
0.4,\,(\ref{3.15d})&           0.30030770...\times 10^{-8}     &   0.30033781... \times 10^{-8}     & 0.10  \times 10^{-3}             \\           
\hline
0.5,\, (\ref{3.15b})   &       0.10925753... \times 10^{-10}     & 0.10926467... \times 10^{-10}     &   0.65 \times 10^{-4}     \\
0.5,\,(\ref{3.15d})    &      0.31580277...\times 10^{-8}     &    0.31583420... \times 10^{-8}     &   0.99   \times 10^{-4}             \\ 
\hline    
\end{array}
$$
\caption{ Comparison of the bounds given in (\ref{3.15b}) and (\ref{3.15d}) 
with the true numerical errors obtained using (\ref{3.15a}) and (\ref{3.15c})  
to approximate (\ref{3.5}) and (\ref{3.6}), respectively. The values of the argument $z$ are varied ($z=1-R_z$). $\nu$, $m$ and $r$ 
are fixed in the calculations ($\nu=100$, $m=1$, $r=4$).     The relative errors in the
comparisons given in Eq.(\ref{reler}) are shown in the last column. }
\end{table}

Another test of the accuracy of the bounds is shown in Table \ref{table7}. The comparison of the bounds with the true numerical errors is shown for different values of $z$. The values of $\nu$, $m$ and $r$ are fixed in the calculations ($\nu=100$, $m=1$, $r=4$).

\appendix
\section{Proof of \cref{thm:as}}
\label{secA} 
We begin with the following.
\begin{proposition}
Let 
\begin{equation} S_{n}=\sum\limits_{j=1}^{n-1}\frac{{j!(n-j)!}}{(n-1)!}=n\sum \limits_{j=1}^{n-1}\dbinom{n}{j}^{-1} 
\label{Sn}.
\end{equation}
Then 
\begin{equation}
S_{n}\leq \frac{2{n}}{n+2}\left\{ 1+\frac{32}{n+3}\right\} +\frac{{3n}\left( n+1\right) }{2}\left( \frac{3}{4}\right) ^{n}\ \left( n=2,3,4,\cdots \right),
\label{Snbound} 
\end{equation}
and moreover 
\begin{equation}
S_{n}=2+\mathcal{O}\left( \frac{1}{n}\right) \ \left( n\rightarrow \infty \right).
\label{SnO} 
\end{equation}

\end{proposition}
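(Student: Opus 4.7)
The plan is to reduce the problem to a single-term linear recurrence for $S_n$ and then prove the bound by induction. First, since $\binom{n}{0}^{-1} = \binom{n}{n}^{-1} = 1$, one has $S_n = n(T_n - 2)$ with $T_n := \sum_{k=0}^n \binom{n}{k}^{-1}$. The classical recurrence $T_n = \frac{n+1}{2n}T_{n-1} + 1$ follows from the two elementary identities $\binom{n}{k}^{-1} = \frac{n-k}{n}\binom{n-1}{k}^{-1}$ (for $k\leq n-1$) and $\binom{n}{k}^{-1} = \frac{k}{n}\binom{n-1}{k-1}^{-1}$ (for $k\geq 1$): adding them, summing over $1\leq k\leq n-1$, and using the symmetry $\binom{n-1}{j}^{-1} = \binom{n-1}{n-1-j}^{-1}$ produces the recurrence directly. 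Translated via $S_n = n(T_n-2)$, this becomes the single-term recurrence
\[
S_n = \frac{n+1}{2(n-1)}\, S_{n-1} + 1 \qquad (n\geq 3), \qquad S_2 = 1.
\]

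Let $U_n$ denote the right-hand side of (\ref{Snbound}). The base cases $n=2, 3$ are verified by direct numerical comparison. The inductive step for $n\geq 4$ assumes $S_{n-1}\leq U_{n-1}$ and, via the recurrence, reduces to showing $\frac{n+1}{2(n-1)} U_{n-1} + 1 \leq U_n$. A short calculation yields
\[
\frac{n+1}{2(n-1)}U_{n-1} + 1 = \Bigl(2 + \tfrac{32}{n+2}\Bigr) + n(n+1)\bigl(\tfrac{3}{4}\bigr)^n,
\]
the exponential term collapsing cleanly because $\frac{n+1}{2(n-1)}\cdot\frac{3n(n-1)}{2} = \frac{3n(n+1)}{4}$ and $(3/4)^{n-1} = (4/3)(3/4)^n$. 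The exponential part is then dominated by $\frac{3n(n+1)}{2}(3/4)^n$ trivially, while the rational part requires $(n+18)(n+3)\leq n(n+35)$, i.e.\ $14n \geq 54$, which holds for $n\geq 4$. The asymptotic estimate $S_n = 2 + O(1/n)$ is then immediate, since $U_n - 2 = O(1/n)$ while $S_n \geq 2$ is clear from the $j=1$ and $j=n-1$ terms of the defining sum (valid for $n\geq 3$).

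The main obstacle is purely algebraic bookkeeping: the specific constants in the stated bound (the $3/4$, the $32$, and the prefactor $3/2$ of the exponential) are chosen precisely so that $U_n$ is a supersolution of the recurrence; once this observation is in place and the recurrence is established, every remaining verification is routine.
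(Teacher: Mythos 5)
Your proof is correct, and it takes a genuinely different route from the paper. The paper works analytically: it converts the factorial ratio into a Beta-function integral, obtaining $S_{n}=2n(n+1)\int_{0}^{1/2}(1-2t)^{-1}\{t(1-t)^{n}-t^{n}(1-t)\}\,dt$, splits the integral at $t=\tfrac14$, bounds the piece near $t=\tfrac12$ by a maximum-modulus argument on the circle $|z-\tfrac12|=\tfrac14$ (which is where the $(3/4)^{n}$ term originates), and then integrates the remaining piece by parts three times, discarding negative terms, to produce the rational part of the bound. You instead exploit the classical first-order recurrence $T_{n}=\frac{n+1}{2n}T_{n-1}+1$ for $T_{n}=\sum_{k=0}^{n}\binom{n}{k}^{-1}$, translate it into $S_{n}=\frac{n+1}{2(n-1)}S_{n-1}+1$, and verify by induction that the stated right-hand side is a supersolution; I checked the key computation $\frac{n+1}{2(n-1)}U_{n-1}+1=2+\frac{32}{n+2}+n(n+1)(3/4)^{n}$ and the reduction of the rational comparison to $14n\geq 54$, and both are right, as are the base cases and the two-sided argument for $S_{n}=2+\mathcal{O}(1/n)$ (the lower bound $S_{n}\geq 2$ for $n\geq 3$ is needed since $\mathcal{O}(1/n)$ is signed, and you supply it). Your approach is more elementary — no contour estimate, no repeated integration by parts — and the recurrence would in fact yield the full asymptotic expansion of $S_{n}$ by iteration; its one cost is that the specific constants $32$, $3/4$, and $3/2$ must be taken as given and merely verified to work, whereas the paper's integral manipulations produce them constructively. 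The derivation of the recurrence from the two absorption identities is slightly more compressed in your write-up than it should be (one needs a second symmetrization to turn $\sum_{j=0}^{n-2}(j+1)\binom{n-1}{j}^{-1}$ into $\frac{n+1}{2}T_{n-1}-n$), but the identity is standard and appears in the very reference on inverse binomial sums that the paper cites.
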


\begin{remark}
From (\ref{SnO}) we observe that the bound (\ref{Snbound}) is asymptotically sharp for large $n$.
\end{remark} 

\begin{proof}
Converting the factorials in (\ref{Sn}) to the Gamma function we obtain (see also \cite{Sury:2004:IIR}) 
\begin{equation}
S_{n}=n\left( n+1\right) \sum\limits_{j=1}^{n-1}\frac{\Gamma \left( j+1\right) \Gamma {(n-j+1)}}{\Gamma \left( n+2\right) }=n\left( n+1\right) \sum\limits_{j=1}^{n-1}B\left( j+1,{n-j+1}\right),
\label{eq22} 
\end{equation}
where $B\left( p,{q}\right) $ is the Beta function 
\begin{equation}
B\left( p,{q}\right) =\frac{\Gamma(p) \Gamma(q)}{ \Gamma(p+q) }=\int_{0}^{1}t^{p-1}(1-t)^{q-1}dt
\quad (\Re(p)>0, \, \Re(q)>0).
\label{beta} 
\end{equation}
Therefore 
\begin{multline}
 S_{n} ={n}\left( n+1\right) \sum\limits_{j=1}^{n-1}\int_{0}^{1}t^{j}(1-t)^{n-j}dt={n}\left( n+1\right) \int_{0}^{1}(1-t)^{n}\sum\limits_{j=1}^{n-1}\left( \frac{t}{1-t}\right) ^{j}dt 
\label{eq23} \\ ={n}\left( n+1\right) \int_{0}^{1}\frac{1}{1-2t}\left\{ t(1-t)^{n}-t^{n}(1-t)\right\} dt
\\={2n}\left( n+1\right) \int_{0}^{1/2}\frac{1 }{1-2t}\left\{ t(1-t)^{n}-t^{n}(1-t)\right\} dt,  
\end{multline}
the last integral coming from symmetry of the integrand about $t=\frac{1}{2}$. It follows that
\begin{equation}
S_{n}={2n}\left( n+1\right) \int_{0}^{1/4}\frac{t(1-t)^{n}}{1-2t}dt+R_{n},
\label{eq30} 
\end{equation}
where 
\begin{multline}
R_{n}={2n}\left( n+1\right) \int_{1/4}^{1/2}\frac{1}{1-2t}\left\{ t(1-t)^{n}-t^{n}(1-t)\right\} dt
\\ -{2n}\left( n+1\right) \int_{0}^{1/4}\frac{ t^{n}(1-t)}{1-2t}dt.
\label{eq31} 
\end{multline}
Next by the maximum modulus theorem and the triangle inequality
\begin{multline}
 \sup_{t\in \left( \frac{1}{4},\frac{1}{2}\right) }\left\vert \frac{1}{1-2t} \left\{ t(1-t)^{n}-t^{n}(1-t)\right\} \right\vert
 \\ \leq \sup_{\left\vert z-(1/2)\right\vert =1/4}\frac{1}{2\left\vert z-\frac{1}{2}\right\vert } \left\vert z(1-z)^{n}-z^{n}(1-z)\right\vert  \\ \leq \sup_{\left\vert z-(1/2)\right\vert =1/4}2\left\{ \left\vert z\right\vert \left\vert 1-z\right\vert ^{n}+\left\vert z\right\vert ^{n}\left\vert 1-z\right\vert \right\} \leq 4\left( \frac{3}{4}\right) ^{n+1}, 
\label{eq32} 
\end{multline}
 since $\left\vert z\right\vert $ and $\left\vert 1-z\right\vert $ both have suprema of $\frac{3}{4}$ on this circle. Thus from (\ref{eq31})
\begin{equation}
R_{n}\leq {8n}\left( n+1\right) \left( \frac{3}{4}\right) ^{n+1}\int_{1/4}^{1/2}dt=\frac{{3n}\left( n+1\right) }{2}\left( \frac{3}{4} \right) ^{n},
\end{equation}
and hence from (\ref{eq30})
\begin{equation}
S_{n}\leq {2n}\left( n+1\right) \int_{0}^{1/4}\frac{t(1-t)^{n}}{1-2t}dt+ \frac{{3n}\left( n+1\right) }{2}\left( \frac{3}{4}\right) ^{n}.
\label{eq33} 
\end{equation}

We now integrate by parts three times, and retain only positive terms, and deduce that 
\begin{multline}
 \int_{0}^{1/4}\frac{t(1-t)^{n}}{1-2t}dt \leq \frac{1}{\left( n+1\right) \left( n+2\right) } \\ +\frac{4}{\left( n+1\right) \left( n+2\right) \left( n+3\right) }\left\{ 1+6\int_{0}^{1/4}\frac{(1-t)^{n+3}}{\left( 1-2t\right) ^{4}}dt\right\}.
\label{Rnbound} 
\end{multline}
 The integral on the RHS is $\mathcal{O}( n^{-1}) $ for large $n$, but the following simple bound will suffice:
\begin{equation} \int_{0}^{1/4}\frac{(1-t)^{n+3}}{(1-2t) ^{4}}dt\leq \int_{0}^{1/4}\frac{1}{(1-2t) ^{4}}dt=\frac{7}{6}.
\label{eq34} 
\end{equation}
The desired bound (\ref{Snbound}) follows from (\ref{eq33}) - (\ref{eq34}). Finally, the discarded negative terms in establishing (\ref{Rnbound}) are ${\mathcal{O }}(n^{-1}) $ and hence (\ref{SnO}) follows. 
\end{proof}

Let us establish (\ref{eq19}). To this end define $a_{s}=\frac{5}{36}2^{-s}c_{s}$. Then $c_{1}=1$ and $c_{2}=2$, with subsequent terms given by 
\begin{equation}
c_{s+1}=\left( {s+1}\right) c_{s}+\frac{5}{36}\sum\limits_{j=1}^{s-1}{ c_{j}c_{s-j}}.
\label{eq101} 
\end{equation}
To get to (\ref{eq19}) wish to prove for some $A \geq 1$ that
\begin{equation} 
\label{eq101c}
s!\leq c_{s}\leq A^{s}s!.
\end{equation}
The lower bound is simple to establish by induction (starting with $c_1=1!$), since it is obvious from $c_{1}$ and $c_{2}$\ both being positive, and the recursion (\ref{eq101}), that $c_{s}\geq 0$ for all $s$. Hence assuming $c_{s}\geq s!$ we have from (\ref{eq101})
\begin{equation}
c_{s+1}\geq \left( s{+1}\right) c_{s}\geq (s+1) s!=(s+1)! .
\end{equation}

Next assume the upper bound in (\ref{eq101c}) holds for $c_{j}$ ($j=1,2,3,\cdots ,s$). Then from (\ref{eq101})
\begin{equation}
\label{eq101d}
c_{s+1}\leq A^{s}(s+1) !+{\frac{5}{36}A^{s}} \sum\limits_{j=1}^{s-1}{j!}\left( {s-j}\right) ! .
\end{equation}
Now from (\ref{Sn}) 
\begin{equation} {\frac{5}{36(s-1)!}}\sum\limits_{j=1}^{s-1}{j!(s-j)!}
\leq K_{s},
\end{equation}
where
\begin{equation} 
K_{s}=\frac{5{s}}{ 18(s+2) }\left\{ 1+\frac{32}{s+3}\right\} +{\frac{5}{24}s} (s+1) \left( \frac{3}{4}\right) ^{s}.
\end{equation}
Thus from (\ref{eq101d}) we have
\begin{equation}
c_{s+1}\leq A^{s}(s+1)!+A^{s}K_{s}(s-1)! .
\end{equation}
So in order for (\ref{eq101c}) to be true for $s$ replaced by $s+1$ it is sufficient for 
\begin{equation}
A^{s}(s+1)!+{A}^{s}K_{s}(s-1)!\leq A^{s+1}(s+1)! ,
\end{equation}
for all $s$, or equivalently
\begin{equation}
\label{eq101e}
A \geq 1+\frac{K_{s}}{s(s+1) }.
\end{equation}
Now since $c_{1}=1$ and $c_{2}=2$ we see that (\ref{eq101c}) certainly holds for any $A \geq 1$. Thus (\ref{eq101e}) holds for all other values of $s$ if we choose
\begin{equation}
A=1+\frac{K_{2}}{2\left( 2+1\right) }=\frac{4453}{3456},
\end{equation}
since the RHS of (\ref{eq101e}) is a decreasing function of $s$. Our asserted upper bound of (\ref{eq19}) is then established by inserting this value of $A$ into (\ref{eq101c}) and recalling that $a_{s}=\frac{5}{36}2^{-s}c_{s}$.

Next we consider proving (\ref{eq19a}), and to do so we let $\tilde{a}_{s}=-2^{-s}\tfrac{7}{36}\tilde{c}_{s}$. We wish to show that $ (s-1)!\leq \tilde{c}_{s}\leq s!$, which is clearly true for the first two terms since $\tilde{c}_{1}=1$ and $\tilde{c}_{2}=2$. Now from (\ref{eq101c}) we have 
\begin{equation}
\tilde{c}_{s+1}=\left( {s+1}\right) \tilde{c}_{s}-{\frac{7}{36}} \sum\limits_{j=1}^{s-1}{\tilde{c}_{j}\tilde{c}_{s-j}}. 
\label{eq20} 
\end{equation}

Again we proceed by using induction. Assume the hypotheses are true for $\tilde{c}_{j}$ ($j=1,2,3,\cdots,s$). Then $\tilde{c}_{s}\leq s!$ and $\tilde{c}_{j}\tilde{c}_{s-j}\geq (j-1)!(s-j-1)!>0$ ($j=1,2,\cdots s-1$), and hence 
\begin{equation}
\tilde{c}_{s+1}\leq (s+1) s!-{\frac{7}{36}} \sum\limits_{j=1}^{s-1}\tilde{c}_{j}\tilde{c}_{s-j}<\left( s+1\right) ! 
\label{eq20a} 
\end{equation}
as required for the upper bound.

It remains to prove the lower bound of (\ref{eq19a}). Now it can be verified numerically that$\ \tilde{c}_{s} \geq (s-1)!$ for $ s=1,2,3,\cdots ,24$. Consider any $s\geq 24$ and assume for $j=1,2,3,\cdots ,s$ that $\tilde{c}_{j}\geq (j-1)!$ (and hence of course $\tilde{c}_{j}\geq 0 $). Since we have established that $\tilde{c}_{s}\leq s!$ for all $s$ it follows from (\ref{eq20}) and the induction hypothesis that 
\begin{multline} 
\tilde{c}_{s+1}\geq (s+1) (s-1) !-\frac{7}{36} \sum\limits_{j=1}^{s-1}{\tilde{c}_{j}\tilde{c}_{s-j}}
\\ \geq s!+(s-1)!
 -{\frac{7}{36}}\sum\limits_{j=1}^{s-1}{j!(s-j)!\geq }s!+(s-1)!-\frac{7}{36} (s-1)!S_{s}. 
\label{eq21} 
\end{multline}
From (\ref{Snbound}) it is straightforward to show by explicit computation of that bound that 
\begin{equation} 
{\tfrac{7}{36}}S_{s}<1\text{ for }s\geq 24, 
\end{equation}
and thus from (\ref{eq21}) $\tilde{c}_{s+1}\geq s!$ for $s\geq 24$, as desired. 

\section*{Acknowledgments}
Financial support from Ministerio de Ciencia e Innovaci\'on, Spain, 
project PGC2018-098279-B-I00 (MCIU/AEI/FEDER, UE)
 is acknowledged. 

\bibliographystyle{siamplain}
\bibliography{biblio}

\end{document}